\DeclareMathOperator{\Hom}{Hom}
\DeclareMathOperator{\End}{End}
\DeclareMathOperator{\Aut}{Aut}
\DeclareMathOperator{\Inf}{Inf}
\DeclareMathOperator{\Res}{Res}
\DeclareMathOperator{\Syl}{Syl}    
\DeclareMathOperator{\proj}{(proj)}
\DeclareMathOperator{\Image}{Im}
\DeclareMathOperator{\diag}{diag}
\newcommand{\GL}{{\operatorname{GL}}}
\newcommand{\GU}{{\operatorname{GU}}}
\newcommand{\PGL}{{\operatorname{PGL}}}
\newcommand{\PSL}{{\operatorname{PSL}}}
\newcommand{\PgammaL}{{\operatorname{P\Gamma L}}}
\newcommand{\SL}{{\operatorname{SL}}}
\newcommand{\SU}{{\operatorname{SU}}}
\newcommand{\PSU}{{\operatorname{PSU}}}
\newcommand{\IF}{\mathbb{F}}
\newcommand{\IZ}{\mathbb{Z}}
\newcommand{\fA}{{\mathfrak{A}}}
\let\lra=\longrightarrow
\let\wt=\widetilde
\newcommand{\lconj}[2]{\,^{#1}\!#2}    
\newtheorem{thm}{Theorem}[section]
\newtheorem{lem}[thm]{Lemma}
\newtheorem{prop}[thm]{Proposition}
\newtheorem{ques}[thm]{Open Question}
\theoremstyle{theorem}
\theoremstyle{definition}
\newtheorem{rem}[thm]{Remark}
\theoremstyle{remark}
\begin{document}


\title{Trivial source endo-trivial modules for finite groups with semi-dihedral Sylow $2$-subgroups}
\date{\today}
\author{{Shigeo Koshitani and Caroline Lassueur}}
\address{{\sc Shigeo Koshitani}, Center for Frontier Science,
Chiba University, 1-33 Yayoi-cho, Inage-ku, Chiba 263-8522, Japan.}
\email{koshitan@math.s.chiba-u.ac.jp}
\address{{\sc Caroline Lassueur}, FB Mathematik, TU Kaiserslautern, Postfach 3049,
         67653 Kaiserslautern, Germany.}
\email{lassueur@mathematik.uni-kl.de}

\thanks{
The first author was partially supported by 
the Japan Society for Promotion of Science (JSPS), Grant-in-Aid for Scientific Research (C) 19K03416, 2019--2021.
The second author acknowledges financial support by DFG SFB/TRR 195. This piece of work is part of Project A18 thereof.
}

\keywords{Endo-trivial modules, semi-dihedral 2-groups, Schur multiplier, trivial source modules, $p$-permutation modules, special linear and unitary groups}

\subjclass[2010]{Primary: 20C20. Secondary: 20C25, 20C33, 20C34, 20J05}

\begin{abstract}
We 
finish off the classification of the endo-trivial modules of finite groups with  Sylow $2$-subgroups isomorphic to a semi-dihedral $2$-group started by Carlson, Mazza and Th\'{e}venaz
in their article \textit{Endotrivial modules over groups with quaternion or semi-dihedral Sylow $2$-subgroup} published in 2013.
\end{abstract}

\dedicatory{Dedicated to Jon Carlson on the occasion of his 80th Birthday and to Jacques Th\'{e}venaz on the occasion of his 70th Birthday.}

\maketitle

\pagestyle{myheadings}
\markboth{S. Koshitani and C. Lassueur}{Endo-trivial modules for groups with semi-dihedral Sylow $2$-subgroups}

\section{Introduction}

Endo-trivial modules play an important role in the representation theory of finite groups. For instance in the description of different types of equivalences between block algebras. 
These modules were  introduced by  E. C. Dade  in 1978 \cite{DADE78a,DADE78b}. They have been intensively studied since the beginning of the century and were classified in a number of special cases: e.g. for groups with cyclic, generalised quaternion, Klein-four or dihedral Sylow subgroups, for $p$-soluble groups, for the symmetric and the alternating groups and their Schur covers, for the sporadic groups their Schur covers, or for some infinite  families of finite groups of Lie type.  We refer the reader to the recent survey book \cite{MazzaBook} by N. Mazza  and the references therein for a complete introduction to this theory.\par
An \textit{endo-trivial} module over the group algebra $kG$ of a finite group $G$ over an algebraically closed field~$k$ of prime characteristic~$p$ is by definition a finitely generated $kG$-module whose $k$-endomorphism algebra  is isomorphic to the trivial module in the stable module category. The set of isomorphism classes of indecomposable endo-trivial $kG$-modules form an abelian group under the tensor product $\otimes_k$, which is denoted $T(G)$, and this group is known to be finitely generated. One of the central questions in this theory  is to understand the structure of the group $T(G)$, and, in particular, of its torsion subgroup~$TT(G)$.\par
Now, letting $X(G)$ be the subgroup of $TT(G)$ consisting of all one-dimensional $kG$-modules and  $K(G)$ be the subgroup of $T(G)$ consisting of all the indecomposable endo-trivial $kG$-modules which are at the same time  trivial source $kG$-modules, we have 
$$\Hom(G,k^\times)\cong X(G)\subseteq K(G)\subseteq TT(G)$$
and  $K(G)=TT(G)$ unless a Sylow subgroup is cyclic, generalised quaternion or semi-dihedral (see \cite[Chapter~5]{MazzaBook}). Although it often happens that $X(G)=K(G)$, in general $X(G)\lneq K(G)$. Furthermore, we emphasise that  the determination of the structure of the endo-trivial modules lying in $K(G)\setminus X(G)$ is a very hard problem, to which, up to date, no general solution is known. 
Most of the work that has been done in previous articles provides case by case solutions to the calculation of the abelian group structure of $K(G)$, but in the vast majority of the cases does not provide information about the structure of the modules  in $K(G)\!\setminus\!X(G)$.\par
In \cite{CMT11quat} Carlson-Mazza-Th\'{e}venaz essentially described the structure of the group $T(G)$ of endo-trivial modules for groups with a semi-dihedral Sylow $2$-subgroup. However they left open the question of computing the trivial source endo-trivial modules, i.e. the structure of the subgroup $K(G)$ of $T(G)$. The purpose of the present article is to finish off the determination of $K(G)$ in this case. In order to reach this aim we use three main ingredients, two of which were not available when \cite{CMT11quat} was published:
 \begin{enumerate}
  \item[1.]  The first one is a method we developed in \cite{KoLa15} in order to treat finite groups with dihedral Sylow $2$-subgroups, extended  in \cite{LT17etcentralext} to a more general method to relate the structure of $T(G)$ to that of $T(G/O_{p'}(G))$, which allows us to reduce the problem to groups with $O_{2'}(G)=1$. 
  \item[2.] The second one is the classification of finite groups with semi-dihedral Sylow $2$-subgroups modulo $O_{2'}(G)$ due to Alperin-Brauer-Gorenstein \cite{ABG}. 
  \item[3.] The third main ingredient relies on major new results obtained by J. Grodal through the use of homotopy theory in \cite{GrodalET}, or more precisely on a slight extension of the main theorem of \cite{GrodalET} recently obtained by D. Craven in  \cite{CravenET} and which provides us with purely group-theoretic techniques to deal with Grodal's description of $K(G)$ in \cite[Theorem 4.27]{GrodalET}. The latter results  will in particular enable us to treat families of groups related to the finite groups of Lie type $\SL_3(q)$ with $q\equiv 3\pmod{4}$ and $\SU_3(q)$ with $q\equiv 1\pmod{4}$. 
\end{enumerate}
\noindent With these tools, our main result  is a description of the structure of the group of endo-trivial modules for groups with a semi-dihedral Sylow $2$-subgroup as follows:

\begin{thm}\label{thm:intro}%
Let $k$ be an algebraically closed field of characteristic~$2$ and let $G$ be a finite group with a semi-dihedral Sylow $2$-subgroup of order $2^m$ with $m\geq 4$. Then the following assertions hold. 
\begin{enumerate}
\item[\rm(a)] If $G/O_{2'}(G)\ncong \PGL_2^{\ast}(9)$, then $T(G)\cong X(G)\oplus \IZ/2\IZ\oplus \IZ$.
\item[\rm(b)] If $G/O_{2'}(G)\cong\PGL_2^{\ast}(9)$, then $T(G)\cong K(G)\oplus \IZ/2\IZ\oplus \IZ$
where 
$$K(G)/X(G)\leq \IZ/3\IZ\,.$$ Moreover,  $K(G)/X(G)=1$ if $G=\PGL_2^{\ast}(9)$ and  the bound $K(G)/X(G)\cong \IZ/3\IZ$ is obtained by the group  $G=3.\PGL_2^{\ast}(9)$.
\end{enumerate}
\end{thm}

\noindent To explain the notation, there are exactly three groups $H$ with $\PSL_2(9)< H < \PgammaL_2(9)$ and $|H:\PSL_2(9)|=2$, but amongst them only one has a semi-dihedral Sylow $2$-subgroup and it  is denoted $\PGL_2^\ast(9)$ (see Section~\ref{sec:not}). We also point out that in both cases the  summand $\IZ$ is generated by the first syzygy module of the trivial $kG$-module and the  summand $\IZ/2\IZ$ is generated by a torsion endo-trivial module which is explicitly determined in  \cite[Proposition~6.4]{CMT11quat}. We note for completeness that the latter module had in fact already been constructed via different methods by Okuyama and Kawata, see~\cite[Theorem~5.1]{Kaw93}. \par
The paper is built up as follows. In Section 2 we introduce our notation. In Section 3 we quote main results  on endo-trivial modules which we will use to prove Theorem~\ref{thm:intro}.
In Section 4 we state and prove preliminary results on groups with semi-dihedral Sylow $2$-subgroups. 
In Section 5 we compute the trivial source endo-trivial modules for the special linear and special unitary groups and finally in Section 6 we  prove Theorem~\ref{thm:intro}.


\vspace{2mm}
\section{Notation and definitions}\label{sec:not}

\enlargethispage{5mm}

Throughout this article, unless otherwise specified we adopt the following notation and conventions.  All groups considered are assumed to be finite and all modules over finite group algebras are assumed to be finitely generated left modules. 
We let $k$ denote an algebraically closed field of prime characteristic $p$, we let $G$ denote a finite group of order divisible~by~$p$, and we let $P$ be a Sylow $p$-subgroup of $G$.\\

We denote by $\mathrm{SD}_{2^m}$ the semi-dihedral group of order $2^m$ with $m\geq 4$, by $C_a$ the cyclic group of order $a\geq 1$,  by $\mathfrak A_a$ and $\mathfrak S_a$ the alternating and the symmetric groups on $a$ letters,  
and we refer to the ATLAS \cite{ATLAS} and \cite{ABG} for the definitions of further standard finite groups that occur in the statements of our main results.
For a prime power~$q$ and $\varepsilon\in\{-1,1\}$, we adopt the following notation. 
We set $\PSL^{\varepsilon}_n(q)=\PSL_n(q)$ (resp. $\GL_n^{\varepsilon}(q)=\GL_n(q)$, $\SL_n^{\varepsilon}(q)=\SL_n(q)$) if $\varepsilon=1$ and $\PSL_3^{\varepsilon}(q)=\PSU_n(q)$ (resp. $\GU_n^{\varepsilon}(q)=\GU_n(q)$, $\SU_n^{\varepsilon}(q)=\SU_n(q)$) if $\varepsilon=-1$). 
We note that we consider $\GU_n(q)$ as defined over $\IF_{q^2}$. 
Moreover, we set  $\SL(2,q,\pm 1):=\{A\in \GL_2(q)\mid \det(A)=\pm1\}$ and $\SU(2,q,\pm):=\{A\in \GU_2(q)\mid \det(A)=\pm1\}$. (These groups are denoted $\SL^{\pm}_2(q)$ and $\SU^{\pm}_2(q)$ in \cite{ABG}, but we slightly alter this notation to avoid confusion.) Also, if $q=r^{2f}$ is a positive even power of an odd prime number $r$, then there are exactly three groups $H$ with $\PSL_2(q)< H < \PgammaL_2(q)$ and $|H:\PSL_2(q)|=2$. 
One is $\PGL_2(q)$, one is contained in $\PSL_2(q)\rtimes\langle F\rangle$ where $F$ is the Frobenius automorphism on $\IF_q$, and the third one is denoted $\PGL_2^\ast(q)$ (see  \cite[p.335]{Gor69}).
In this article, we will only work with $\PGL_2^\ast(q)$ and we will essentially need the fact that $\PSL_2(q)$ is normal $\PGL_2^\ast(q)$. Hence we will write $\PGL_2^\ast(q)=\PSL_2(q).2$ without making the difference with the other two extensions. 
For $\emptyset \,{\not=}\,S \subseteq G\ni g,x$ we write 
$^g{\!}S:= \{ gsg^{-1} | s\in S\}$
 and $x^g:= g^{-1}xg$. We let $O_{p'}(G)$, resp. $O_p(G)$, be the largest normal $p'$-subgroup, resp. $p$-subgroup, of $G$ and $O^{p'}(G)$ the smallest normal subgroup of $G$ whose quotient is a $p'$-group. 
Following \cite[Section~2]{CravenET} we define $K_G^{\circ}$ to be the normal subgroup of $N_G(P)$ generated by $N_G(P)\cap O^{p'}(N_G(Q))$ for all $N_G(P)$-conjugacy classes
of subgroups $1<Q\leq P$. Clearly $K_G^{\circ}\unlhd N_G(P)$.\\

We will see the Schur multiplier of $G$ as $H^2(G, \mathbb C^\times)=:M(G)$ and we recall that it is well-known that  $H^2(G, k^\times) \cong  M(G)_{2'}$ (see e.g. \cite[Proposition 2.1.14]{Karp}).
We recall that a \emph{$p'$-representation group} of $G$ (or a \emph{representation group of $G$ relative to $k$}) is a $p'$-central extension $\widetilde{G}$ of $G$ of minimal order with the projective lifting
property.  We emphasise that the kernel of such a $p'$-central extension is isomorphic to $H^2(G, k^\times)$, and  if the Schur multiplier is a $p'$-group, then a $p'$-representation group is just a representation group in the usual sense. For further details on this notion we refer the reader to the expository note \cite{LT17b}.\\

If $M$ is a $kG$-module, then we denote by $M^*$ the $k$-dual of $M$ and by $\End_k(M)$ its $k$-endomorphism algebra, both of which are endowed with the $kG$-module structure given by the conjugation action of $G$.
We recall that a $kG$-module $M$ is called \emph{endo-trivial} if there is an isomorphism of $kG$-modules 
$$\End_k(M)\cong k\oplus \proj\,$$
where $k$ denotes the trivial $kG$-module and $\proj$ denotes a projective $kG$-module or possibly the zero module. Any endo-trivial $kG$-module $M$ decomposes as a direct sum  $M= M_{0}\oplus \proj$ where $M_{0}$, the projective-free part of~$M$, is indecomposable and endo-trivial.
The set $T(G)$ of all isomorphism classes of indecomposable endo-trivial $kG$-modules endowed with the composition law $[M]+[L]:=[(M\otimes_k L)_{0}]$ is an abelian group called the \emph{group of endo-trivial modules of $G$}. 
The zero element is the class $[k]$ of the trivial module and $-[M]=[M^{*}]$. By a result of Puig for finite $p$-groups, extended to arbitrary finite groups by Carlson-Mazza-Nakano, the group $T(G)$ is known to be a finitely generated abelian group (see \cite[Theorem 2.3]{MazzaBook}).\par
We let $X(G)$ denote the group of one-dimensional $kG$-modules endowed with the tensor product $\otimes_k$\,. Clearly $X(G)\leq T(G)$ and we recall that 
$$X(G)\cong \Hom(G,k^{\times})\cong (G/[G,G])_{p'}\,.$$
In particular, it follows that $X(G)=\{[k]\}$ when $G$ is $p'$-perfect. Furthermore, we let $K(G)$ denote the subgroup of $T(G)$ consisting of the isomorphism classes of indecomposable endo-trivial $kG$-modules with a  trivial source.  It follows easily from the theory of vertices and sources that $K(G)$ is precisely the kernel of the restriction homomorphism
$$
\Res_P^G : T(G) \longrightarrow T(P), [M]\mapsto[\Res^G_P(M)_0] \,.
$$
Clearly  $X(G)\leq K(G)$ since the restriction of a one-dimensional $kG$-module to $P$ is the trivial module. Moreover, $K(G)$ is a finite group as  it is made up of isomorphism classes of trivial source $kG$-modules, hence $K(G)\leq TT(G)$ . Moreover, by the main result of~\cite{CT}, we have $K(G)=TT(G)$~unless $P$ is cyclic, generalised quaternion, or semi-dihedral.\\


\vspace{2mm}
\section{Known results}

To begin with, we quickly review the results about $T(G)$ in the semi-dihedral case obtained by Carlson-Th\'evenaz in \cite{CT} and Carlson-Mazza-Th\'{e}venaz in~\cite{CMT11quat}.

\begin{thm}\label{thm:CMT}
Let $p=2$ and let $G$ be a finite group with a Sylow $2$-subgroup $P\cong \mathrm{SD}_{2^m}$ of order $2^m$ with $m\geq 4$. Then the following assertions hold.
\begin{enumerate}
\item[\rm(a)] \cite[Theorem~7.1]{CT} $T(P)\cong \IZ/2\IZ\oplus \IZ$.
\item[\rm(b)] \cite[Proposition~6.1]{CMT11quat}  $T(G)\cong K(G)\oplus \Image(\Res^G_P)$.
\item[\rm(c)] \cite[Proposition~6.4]{CMT11quat}  $\Res^G_P:T(G)\lra T(P)$ is a split surjective group homomorphism.
\item[\rm(d)] 
 \cite[Proposition~6.4]{CMT11quat} $TT(G)\cong K(G)\oplus\IZ/2\IZ$, where the $\IZ/2\IZ$ summand is generated by a self-dual torsion endo-trivial module which is not a trivial source module. 
\item[\rm(e)] \cite[Corollary~6.5]{CMT11quat}  If $P=N_G(P)$, then $K(G)=\ker(\Res^{G}_{P})=
\{[k]\}$
and hence $T(G)\cong T(P)\cong \IZ/2\IZ\oplus \IZ$. 
\end{enumerate}
\end{thm}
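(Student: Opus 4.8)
The five assertions are drawn from the literature, so I only indicate how I would organise a proof of Theorem~\ref{thm:CMT}, emphasising the way the parts depend on one another, since it is this interdependence that will be used in the sequel. Part~(a) is the case $G=P$ and falls within the Carlson--Th\'evenaz classification of endo-trivial modules over $2$-groups. The plan is to record that $\mathrm{SD}_{2^m}$ has $2$-rank~$2$, with a single conjugacy class of Klein-four subgroups and no elementary abelian subgroup of rank~$3$; from the classification this forces the torsion-free rank of $T(P)$ to be~$1$, a generator being the first syzygy $[\Omega(k)]$ (of infinite order since $|P|\geq 16$). The substantive point is the torsion subgroup: one must exhibit a non-trivial endo-trivial $kP$-module whose class in $T(P)$ has order two, and argue from the poset/rank data that it is, up to isomorphism, the only non-trivial torsion class; this yields $TT(P)\cong\IZ/2\IZ$ and hence~(a). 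This explicit determination of $TT(P)$ is the first genuinely hard step.

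For~(c) the plan is to prove that $\Res^G_P$ maps onto $T(P)\cong\IZ/2\IZ\oplus\IZ$ by lifting both generators. As restriction is exact and carries projective $kG$-modules to free $kP$-modules, it commutes with~$\Omega$, so $\Res^G_P[\Omega(k_G)]=[\Omega(k_P)]$ lifts the free generator. For the torsion generator $t_0$ of $T(P)$ one has to construct, by an explicit argument, an endo-trivial $kG$-module whose restriction to $P$ has class~$t_0$; I expect this to be \emph{the main obstacle}, since it requires producing an endo-trivial module ``from the top'' with prescribed behaviour on the Sylow subgroup. Granting surjectivity, a section of $\Res^G_P$ is obtained by sending the free generator of $T(P)$ to $[\Omega(k_G)]$ and the torsion generator to the module just built, which proves~(c). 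Part~(b) is then formal: by the description of $K(G)$ recalled in Section~2 one has $K(G)=\ker(\Res^G_P)$, and a split short exact sequence of abelian groups gives $T(G)\cong\ker(\Res^G_P)\oplus\Image(\Res^G_P)=K(G)\oplus\Image(\Res^G_P)$.

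For~(d) I would pass to torsion subgroups in~(b): $K(G)\subseteq TT(G)$ is finite, while $\Image(\Res^G_P)=T(P)$ by~(c), so $TT(G)\cong K(G)\oplus TT(P)\cong K(G)\oplus\IZ/2\IZ$ by~(a); the $\IZ/2\IZ$ is generated by the lift of $t_0$ constructed in~(c), which is self-dual because $t_0$ is and which cannot be a trivial source module, as otherwise its class would lie in $K(G)=\ker(\Res^G_P)$. Finally, for~(e) the plan is to use the theory of $p$-permutation modules. If $M$ is an indecomposable trivial source endo-trivial $kG$-module with vertex~$P$, applying the Brauer construction at~$P$ to $\End_k(M)\cong k\oplus\proj$ --- and noting that projective $kG$-modules restrict freely to $P$ and are therefore killed by it --- gives $\End_k\!\big(M(P)\big)\cong k$, so the Brauer quotient $M(P)$ is one-dimensional; by Brou\'e's classification of $p$-permutation modules, $M$ is determined by $M(P)$, whence $[M]\mapsto[M(P)]$ defines an injective homomorphism $K(G)\hookrightarrow X\!\big(N_G(P)/P\big)\cong X\!\big(N_G(P)\big)$. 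When $P=N_G(P)$ the target is trivial, so $K(G)=\{[k]\}=\ker(\Res^G_P)$, and then (b) together with~(c) and~(a) gives $T(G)\cong\Image(\Res^G_P)=T(P)\cong\IZ/2\IZ\oplus\IZ$, establishing~(e).
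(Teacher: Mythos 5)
This theorem is a compendium of results quoted from \cite[Theorem~7.1]{CT} and \cite[Propositions~6.1, 6.4 and Corollary~6.5]{CMT11quat}; the paper supplies no proof of its own, and your outline correctly reconstructs the logical architecture of the cited arguments (Alperin's rank computation and the determination of $TT(P)$ for (a), lifting the torsion generator to obtain the splitting in (c), deducing (b) and (d) formally from $K(G)=\ker(\Res^G_P)$, and the Brauer-construction/Brou\'e-correspondence embedding $K(G)\hookrightarrow X(N_G(P))$ for (e)). You rightly flag that the two genuinely hard steps --- computing $TT(\mathrm{SD}_{2^m})\cong\IZ/2\IZ$ and constructing an endo-trivial $kG$-module restricting to the torsion generator of $T(P)$ --- are not carried out in your sketch, but since these are precisely the quoted results, this is acceptable.
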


\begin{ques}
The remaining open question in the article~\cite{CMT11quat} by Carlson-Mazza-Th\'{e}venaz about finite groups $G$ with semi-dihedral Sylow $2$-subgroups  is to compute the structure of the group $K(G)$ when the Sylow $2$-subgroups are not self-normalising.
 
\end{ques}

\noindent  Next, we state below the main results which we will use in our proof of Theorem~\ref{thm:intro}. However, they are all valid in arbitrary prime characteristic~$p$. 

\begin{lem}[{}{\cite[Lemma 2.6]{MT07}}]\label{lem:Op(G)}
Let $G$ be a finite group and  let $P$ be a Sylow $p$-subgroup of $G$. 
If $\lconj{x}{P}\cap P\neq 1$ for every $x\in G$, then $K(G)=X(G)$. In particular, if  $O_p(G)>1$, then $K(G)=X(G)$. 
\end{lem}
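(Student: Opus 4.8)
The plan is to prove the equality $K(G)=X(G)$ by establishing its two inclusions. Since $X(G)\subseteq K(G)$ holds for every finite group (as recalled in Section~2), the real content is $K(G)\subseteq X(G)$, i.e.\ that every indecomposable endo-trivial trivial source $kG$-module $M$ is one-dimensional. Recall from Section~2 that $K(G)=\ker\big(\Res^G_P\colon T(G)\to T(P)\big)$, so for such an $M$ the projective-free part of $\Res^G_P(M)$ is the trivial $kP$-module; as projective $kP$-modules are free ($P$ being a $p$-group), this means
$$\Res^G_P(M)\cong k\oplus(\text{free }kP\text{-module}).$$
I would first use this to pin down a vertex $Q\leq P$ of $M$. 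Because $M$ has trivial source, $M\mid\Ind_Q^G k=k[G/Q]$, and the Mackey formula yields $\Res^G_P(k[G/Q])\cong\bigoplus_{g}k[P/(P\cap\lconj{g}{Q})]$ over a set of $(P,Q)$-double coset representatives $g$. Over a $p$-group, the trivial module is a direct summand of a transitive permutation module $k[P/V]$ only if $V=P$, since the composite $k\hookrightarrow k[P/V]\twoheadrightarrow k$ through the (one-dimensional) spaces of $P$-homomorphisms is multiplication by $[P:V]$, which is invertible in $k$ exactly when $V=P$. As $k$ is a direct summand of $\Res^G_P(M)$, we get $P\leq\lconj{g}{Q}$ for some $g$, forcing $Q$ to be a Sylow $p$-subgroup; after conjugating we may assume $Q=P$.

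Now comes the point where the hypothesis enters. With vertex $P$ we have $M\mid k[G/P]$, and the Mackey formula gives
$$\Res^G_P(k[G/P])\cong\bigoplus_{g\in[P\backslash G/P]}k[P/(P\cap\lconj{g}{P})].$$
The assumption that $\lconj{x}{P}\cap P\neq 1$ for every $x\in G$ forces $P\cap\lconj{g}{P}\neq 1$ for every double coset representative $g$; hence each summand $k[P/(P\cap\lconj{g}{P})]$ has $k$-dimension $[P:P\cap\lconj{g}{P}]<|P|$ and therefore contains no free $kP$-summand. Thus $\Res^G_P(k[G/P])$ has no free direct summand at all. By the Krull--Schmidt theorem, $\Res^G_P(M)$, being a direct summand of $\Res^G_P(k[G/P])$, also has no free summand; combined with $\Res^G_P(M)\cong k\oplus(\text{free})$ this forces the free part to vanish, so $\Res^G_P(M)\cong k$ and $\dim_k M=1$. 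Hence $[M]\in X(G)$, which gives $K(G)\subseteq X(G)$ and so the equality. For the ``in particular'' clause: if $O_p(G)>1$ then, being a normal $p$-subgroup, $O_p(G)$ lies in every Sylow $p$-subgroup, hence $O_p(G)\leq\lconj{x}{P}\cap P$ for all $x\in G$, so the hypothesis is satisfied.

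I expect the only genuinely delicate step to be the vertex computation in the first paragraph — one must make sure the trivial $kP$-module cannot be split off from $k[P/V]$ for a proper subgroup $V<P$ — but this reduces to the elementary augmentation/coaugmentation computation indicated above. Everything else is bookkeeping with the Mackey formula together with the facts that over a $p$-group projectivity coincides with freeness and that freeness (or the absence of a free summand) is detected by $k$-dimension.
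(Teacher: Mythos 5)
Your proof is correct: the reduction to vertex $P$ via the indecomposability of transitive permutation modules over a $p$-group, followed by the Mackey decomposition of $\Res^G_P\Ind_P^G k$ and the observation that the hypothesis $\lconj{x}{P}\cap P\neq 1$ rules out free summands, is exactly the standard argument. The paper itself gives no proof (it quotes the result from \cite[Lemma 2.6]{MT07}), and your argument coincides with the one in that reference, including the correct handling of the ``in particular'' clause via $O_p(G)\leq \lconj{x}{P}\cap P$.
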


\noindent Then, the following lemma will  be applied to semi-direct products of the form  $G=N\rtimes H$ with $p\nmid |G:N|$. 

\begin{lem}[{}{\cite[Theorem 5.1.(4.)]{MazzaBook}}]\label{lem:normal}
Let $G$ be a finite group and let $N\unlhd G$ such that $p\nmid |G:N|$. If  $K(N)=X(N)$, then $K(G)=X(G)$.
\end{lem}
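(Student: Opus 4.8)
The plan is to reduce the computation of $K(G)$ to that of $K(N)$ by exploiting the fact that $N$ contains a Sylow $p$-subgroup $P$ of $G$, so that $P$ is simultaneously a Sylow $p$-subgroup of both $N$ and $G$. First I would recall the characterisation quoted in Section~2: for any finite group $H$ with Sylow $p$-subgroup $P$, the subgroup $K(H)$ is the kernel of the restriction map $\Res^H_P\colon T(H)\to T(P)$, i.e. $K(H)$ consists precisely of the classes of indecomposable trivial source endo-trivial modules. So the claim $K(G)=X(G)$ amounts to showing that every indecomposable trivial source endo-trivial $kG$-module is one-dimensional, given that the same holds over $N$.

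The key step is a restriction--induction argument. Let $M$ be an indecomposable trivial source endo-trivial $kG$-module with vertex $P$ (which we may assume by the vertex theory, since $M\in K(G)$ forces the vertex to be a full Sylow subgroup). Restricting to $N$, the module $\Res^G_N(M)$ is a trivial source $kN$-module, and since $p\nmid |G:N|$ it is relatively $N$-projective with respect to the trivial subgroup in the sense that the restriction map behaves well; more to the point, $\Res^G_N(M)$ is again endo-trivial as a $kN$-module, because $\End_k(\Res^G_N M)=\Res^G_N(\End_k M)\cong k\oplus(\proj)$ and a projective $kG$-module restricts to a projective $kN$-module (as $P\le N$). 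Decomposing $\Res^G_N(M)=M_0\oplus(\proj)$ with $M_0$ indecomposable endo-trivial over $N$, and observing that $M_0$ still has trivial source (a summand of the restriction of a trivial source module is a trivial source module), we get $[M_0]\in K(N)=X(N)$, so $M_0$ is one-dimensional. Hence $\dim_k M \equiv \dim_k M_0 = 1 \pmod{p}$ after stripping the projective part, and in fact $M_0$ being one-dimensional means $\Res^G_N(M)$ has a one-dimensional indecomposable summand.

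From here I would conclude that $M$ itself is one-dimensional. One clean way: since $M$ is a trivial source module with vertex $P\le N$, by the theory of trivial source modules (the Brauer construction / Green correspondence) $M$ is a direct summand of $\Ind^G_N\Res^G_N(M)$, and since $M_0$ is one-dimensional, $\Ind^G_N(M_0)$ is a permutation-like module of dimension $|G:N|$, all of whose indecomposable summands are trivial source modules with vertex $P$; matching dimensions modulo $p$ and using that $M$ is endo-trivial (hence its projective-free part has dimension $\equiv\pm1\pmod{|P|}$) pins $\dim_k M$ down to $1$. Alternatively, and more cheaply, I would invoke the Green correspondence between $kG$- and $kN_G(P)$-modules together with the fact that $N_N(P)=N_G(P)\cap N$ has $p'$-index in $N_G(P)$: the Green correspondent of $M$ lies in $K(N_G(P))$, and one reduces the whole statement to the Sylow normaliser, where $X=K$ is visible directly. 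The main obstacle I anticipate is the bookkeeping in this last step — making the dimension/vertex matching rigorous so that "one-dimensional over $N$" genuinely forces "one-dimensional over $G$", rather than merely "Sylow-trivial source over $G$"; the cleanest route is probably to phrase everything through the Green correspondent over $N_G(P)$ and the surjection $\Hom(G,k^\times)\to\Hom(N,k^\times)$, rather than through explicit induction of modules.
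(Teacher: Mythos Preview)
The paper gives no proof of this lemma: it is listed in Section~3 among the ``Quoted results'' and is simply cited from \cite[Theorem~5.1.(4)]{MazzaBook}. There is therefore no argument in the paper to compare your proposal against.

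On the substance of your sketch: the restriction step is correct and does yield a one-dimensional $M_{0}\in K(N)=X(N)$, and you are right that $M\mid \Ind^{G}_{N}(M_{0})$ because $M$ is indecomposable, non-projective and relatively $N$-projective. The ingredient you are missing to finish your first route cleanly is the $G$-stability of $M_{0}$. Since $\Res^{G}_{N}(M)\cong{}^{g}\!\bigl(\Res^{G}_{N}(M)\bigr)$ for every $g\in G$ and $M_{0}$ is the unique non-projective indecomposable summand of this restriction, Krull--Schmidt forces ${}^{g}M_{0}\cong M_{0}$. Mackey then gives $\Res^{G}_{N}\Ind^{G}_{N}(M_{0})\cong |G{:}N|\cdot M_{0}$, so $\Res^{G}_{N}(M)$ is a direct sum of copies of the one-dimensional module $M_{0}$; comparing with $\Res^{G}_{N}(M)=M_{0}\oplus(\mathrm{proj})$ and noting that a one-dimensional module is never projective when $p\mid|N|$, the projective part must vanish and $\dim_{k}M=1$. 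No congruence bookkeeping modulo $|P|$ is required.

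Your alternative route through the Green correspondent over $N_{G}(P)$, however, does not close the gap. You would indeed obtain that the Green correspondent $f(M)$ lies in $K(N_{G}(P))=X(N_{G}(P))$ (this is Lemma~\ref{lem:Op(G)}, since $O_{p}(N_{G}(P))=P\neq1$) and is therefore one-dimensional; but this is already an immediate consequence of $[M]\in K(G)$ and says nothing about $\dim_{k}M$. Elements of $K(G)\setminus X(G)$, when they exist, are precisely indecomposable modules of dimension larger than~$1$ whose Green correspondents are one-dimensional, so this reduction is circular. Likewise, the map $\Hom(G,k^{\times})\to\Hom(N,k^{\times})$ you invoke is not surjective in general, so that shortcut is not available either.
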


\noindent  We will also use the following result from our previous paper on endo-trivial modules for groups with dihedral Sylow $2$-subgroups.

\begin{thm}[{}{\cite[Theorem 1.1]{KoLa15}}]\label{thm:KoLa15}
Let $G$ be a finite group with $p$-rank at least $2$ and which does not admit a  strongly $p$-embedded subgroup.
Let $H\unlhd G$ be a normal subgroup such that  $p\nmid |H|$. If $H^2(G, k^{\times}) = 1$, then
$$K(G) = X(G) + \Inf_{G/H}^{G}(K(G/H))\,.$$
\end{thm}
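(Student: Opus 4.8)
The plan is to prove the two inclusions separately. The inclusion $X(G)+\Inf_{G/H}^{G}(K(G/H))\subseteq K(G)$ is formal: one always has $X(G)\subseteq K(G)$, and since $H$ is a $p'$-group the permutation $kG$-module $k[G/H]$ is projective, so the tensor functor $\Inf_{G/H}^{G}$ carries projective modules to projective modules; as it also fixes the trivial module and sends permutation modules to permutation modules, it maps indecomposable trivial source endo-trivial $k[G/H]$-modules to trivial source endo-trivial $kG$-modules, and hence restricts to a group homomorphism $K(G/H)\to K(G)$.

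For the reverse inclusion I would first pass to $N:=N_{G}(P)$. If $[M]\in K(G)$, then $\Res^{G}_{P}M\cong k\oplus(\text{free})$; as the trivial $kP$-module has vertex $P$ while any indecomposable summand of $\Res^{G}_{P}M$ has a vertex $G$-conjugate into a vertex of $M$, the module $M$ must have a Sylow $p$-subgroup as vertex. Hence Green correspondence with respect to $N$ is defined on all of $K(G)$; writing $f$ for it, a short argument with vertices gives $(\Res^{G}_{N}M)_{0}=f(M)$, and $f$ is injective. Moreover $[f(M)]=[\Res^{G}_{N}M]$ restricts trivially to $P$, so $[f(M)]\in K(N)$, and since $1<Z(P)\leq O_{p}(N)$, Lemma~\ref{lem:Op(G)} gives $K(N)=X(N)=\Hom(N,k^{\times})$. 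Thus $f(M)$ is a one-dimensional $kN$-module $M_{\lambda}$, and $\varepsilon\colon[M]\mapsto\lambda$ is an injective group homomorphism $K(G)\hookrightarrow\Hom(N,k^{\times})$. By the main results of \cite{GrodalET} and \cite{CravenET} — which, for $G$ without a strongly $p$-embedded subgroup, identify $K(G)$ with $\Hom(N/K_{G}^{\circ},k^{\times})$ via restriction to $N$, and which apply here because $G$ has $p$-rank $\geq2$ and no strongly $p$-embedded subgroup — the image of $\varepsilon$ is exactly $\Hom(N/K_{G}^{\circ},k^{\times})$.

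Next I would compare $G$ with $\bar G:=G/H$. Write $\pi\colon G\twoheadrightarrow\bar G$, $\bar P:=\pi(P)$ and $N_{H}(P):=N\cap H$. Because $H$ is a normal $p'$-subgroup, $\pi$ restricts to an isomorphism $P\cong\bar P$ under which the fusion systems of $G$ and of $\bar G$ coincide, $N_{\bar G}(\bar P)=\pi(N)\cong N/N_{H}(P)$, and $K_{\bar G}^{\circ}=\pi(K_{G}^{\circ})$; moreover $\Res^{G}_{N}\circ\Inf_{\bar G}^{G}$ is naturally isomorphic to the composite of $\Res^{\bar G}_{N_{\bar G}(\bar P)}$ with inflation along $N\twoheadrightarrow N/N_{H}(P)$. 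Running the construction of the previous paragraph for $\bar G$ as well, one finds inside $\Hom(N,k^{\times})$ that $\varepsilon(X(G))$ is the image of restriction $\Hom(G,k^{\times})\to\Hom(N,k^{\times})$ and that $\varepsilon\big(\Inf_{\bar G}^{G}(K(\bar G))\big)=\Hom(N/K_{G}^{\circ}N_{H}(P),k^{\times})$. Since $\varepsilon$ is injective and $X(G)+\Inf_{\bar G}^{G}(K(\bar G))\subseteq K(G)$, for $[M]\in K(G)$ with $\varepsilon([M])=\lambda$ it then suffices to produce $\phi\in\Hom(G,k^{\times})$ with $\phi|_{N_{H}(P)}=\lambda|_{N_{H}(P)}$: indeed such a $\phi$ automatically kills $K_{G}^{\circ}$ — being trivial on each $O^{p'}(N_{G}(Q))$, which is generated by $p$-elements — so that $\lambda\cdot(\phi|_{N})^{-1}\in\Hom(N/K_{G}^{\circ}N_{H}(P),k^{\times})$, i.e.\ $[M]\in X(G)+\Inf_{\bar G}^{G}(K(\bar G))$. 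As $k^{\times}$ is divisible, hence injective as an abelian group, such a $\phi$ exists precisely when the character $\lambda|_{N_{H}(P)}$ of the $p'$-group $N_{H}(P)$ is trivial on $N_{H}(P)\cap O^{p'}(G)[G,G]$.

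The remaining point — and, I expect, the technical heart of the argument — is to establish this last triviality: every $\lambda\in\Hom(N,k^{\times})$ trivial on $K_{G}^{\circ}$ must be trivial on $N_{H}(P)\cap O^{p'}(G)[G,G]$; equivalently, this $p'$-subgroup of $H$ has trivial image in the maximal abelian $p'$-quotient of $N/K_{G}^{\circ}$. This is where the hypothesis $H^{2}(G,k^{\times})=1$ intervenes: the vanishing of the Schur multiplier of $G$ over $k$ controls, through transfer and the Lyndon--Hochschild--Serre five-term exact sequence attached to $1\to H\to G\to\bar G\to1$, how the derived subgroup $[G,G]$ meets the normal $p'$-subgroup $H$, forcing any ``$p'$-abelianised'' contribution of $N_{H}(P)$ to be absorbed into the local generators $N\cap O^{p'}(N_{G}(Q))$ of $K_{G}^{\circ}$. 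That the hypothesis is indispensable is visible in Theorem~\ref{thm:intro}(b): for $G=3.\PGL^{\ast}_{2}(9)$ the normal $p'$-subgroup $H=O_{2'}(G)\cong C_{3}$ satisfies $N_{H}(P)=H\subseteq[G,G]$ and is detected by the nontrivial torsion of $K(G)$, so there $H^{2}(G,k^{\times})\neq1$ and the displayed equality indeed fails.
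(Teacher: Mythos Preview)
The paper does not prove this statement at all: Theorem~\ref{thm:KoLa15} sits in the section ``Quoted results'' and is simply cited from \cite[Theorem~1.1]{KoLa15}, so there is no proof in the present paper to compare against. Any assessment must therefore be of your argument on its own terms (and, implicitly, against what the original \cite{KoLa15} does).

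On those terms, your proposal has a genuine gap. The first two paragraphs are fine, but in the final paragraph you explicitly stop short of the crux: you need that every $\lambda\in\Hom(N/K_G^{\circ},k^{\times})$ is trivial on $N_H(P)\cap[G,G]$, and you only \emph{assert} that the hypothesis $H^2(G,k^{\times})=1$ ``controls, through transfer and the Lyndon--Hochschild--Serre five-term exact sequence \dots'' this intersection. No mechanism is given that links the vanishing of $H^2(G,k^{\times})$ to the inclusion $N_H(P)\cap[G,G]\subseteq K_G^{\circ}$, and the five-term sequence for $1\to H\to G\to \bar G\to 1$ relates $H^1$ and $H^2$ of $G$, $H$, $\bar G$, not the local subgroups $N_G(Q)$ whose $O^{p'}$ generate $K_G^{\circ}$. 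As written this is a hope, not an argument; the example you cite at the end shows the hypothesis is necessary, but does nothing towards sufficiency.

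There is also an anachronism worth flagging: you build the reverse inclusion on the Grodal/Craven identification $K(G)\cong\Hom(N/K_G^{\circ},k^{\times})$, but \cite{KoLa15} predates both \cite{GrodalET} and \cite{CravenET} (the present paper itself lists Grodal's and Craven's results among the tools ``not available'' earlier). So whatever the proof in \cite{KoLa15} is, it cannot be this one. The original argument proceeds instead via character theory and central extensions: one lifts the trivial source endo-trivial module to characteristic zero, analyses its restriction to the $p'$-subgroup $H$ using Clifford theory, and uses $H^2(G,k^{\times})=1$ to turn a projective representation arising from an irreducible constituent of $M|_H$ into an honest one-dimensional representation of $G$; tensoring by its inverse lands $M$ in $\Inf_{G/H}^G(K(G/H))$ up to $X(G)$. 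That line uses the Schur-multiplier hypothesis in a concrete way (lifting projective characters), which is exactly what your sketch is missing.
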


\noindent In case $H^2(G, k^{\times}) \neq 1$, then we may apply the following generalisation of the above result.

\begin{thm}[{}{\cite[Theorem 1.1]{LT17etcentralext}}]\label{thm:LT17}
Let $G$ be a finite group with $p$-rank at least $2$ and which does not admit a  strongly $p$-embedded subgroup.
Let $\wt{Q}$ be any $p'$-representation group of the group $Q:=G/O_{p'}(G)$.
\begin{enumerate}
\item[\rm(a)] There exists an injective group homomorphism   
$$\Phi_{G,\wt{Q}}: T(G)/X(G)\lra T(\wt{Q})/X(\wt{Q})\,.$$
In particular, $\Phi_{G,\wt{Q}}$ maps the class of $\Inf_Q^G(W)$ to the class of~$\Inf_Q^{\wt{Q}}(W)$, for any endo-trivial $kQ$-module~$W$.
\item[\rm(b)] The map $\Phi_{G,\wt{Q}}$ induces by restriction an injective group homomorphism   
$$\Phi_{G,\wt{Q}}: K(G)/X(G)\lra K(\wt{Q})/X(\wt{Q})\,.$$
\item[\rm(c)] In particular, if $K(\wt{Q})= X(\wt{Q})$, then $K(G)= X(G)$.\\
\end{enumerate}
\end{thm}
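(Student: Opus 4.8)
The plan is to construct $\Phi_{G,\wt{Q}}$ by Clifford theory relative to the normal $p'$-subgroup $N:=O_{p'}(G)$, using that a $p'$-representation group simultaneously realises all the twisted group algebras of $Q:=G/N$. Write $1\to Z\to\wt{Q}\to Q\to 1$ for the defining central extension, so $Z$ is a central $p'$-subgroup with $\widehat{Z}\cong H^2(Q,k^\times)_{p'}$; for $\lambda\in\widehat{Z}$ the central idempotent $e_\lambda\in k\wt{Q}$ satisfies $e_\lambda k\wt{Q}\cong k_{\alpha_\lambda}Q$ as twisted group algebras, where $\alpha_\lambda\in H^2(Q,k^\times)_{p'}$ is the class matching $\lambda$; also $\wt{Q}$ has a Sylow $p$-subgroup isomorphic to $P$.

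First I would prove the structural step: if $M$ is an indecomposable endo-trivial $kG$-module, then $\Res^G_N(M)$ is isotypic, $\Res^G_N(M)\cong e\cdot S$ with $S$ a $G$-invariant simple $kN$-module. Since $kN$ is semisimple, $\Res^G_N(M)$ is semisimple, and as $M$ is indecomposable its constituents form one $G$-orbit with common multiplicity; thus $M\cong\Ind_I^G(M_0)$ with $I$ the inertia group of a constituent $S$. If $I\lneq G$, then $\Res^G_I(M)$ is endo-trivial over $kI$ (restrictions of projectives are projective), so $M_0$ is endo-trivial; writing $M\otimes_k M^*\cong k\oplus\proj$ and isolating the trivial double coset in the Mackey formula forces the augmentation submodule of $k[G/I]$ to be projective, and restricting to $P$ then forces, by counting trivial summands, first $N_G(P)\le I$ and then $I\cap\lconj{x}{I}$ to be a $p'$-group for every $x\notin I$ --- i.e.\ $I$ would be strongly $p$-embedded in $G$, which is excluded. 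Hence $I=G$, and Clifford theory with a factor set yields $M\cong S^{(\alpha)}\otimes_k V$, where $S^{(\alpha)}$ is a projective extension of $S$ to $G$ with factor set $\alpha\in H^2(Q,k^\times)$ and $V$ is a $k_{\alpha^{-1}}Q$-module; choosing $S^{(\alpha)}$ with determinant $1$ shows $\alpha$ has order dividing $\dim S$, hence dividing $|N|$, so $\alpha\in H^2(Q,k^\times)_{p'}$. A direct check gives: $M$ is endo-trivial over $kG$ iff $V$ is endo-trivial over $k_{\alpha^{-1}}Q$ (i.e.\ $\End_k(V)\cong k\oplus\proj$ as $kQ$-modules). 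Passing from $k_{\alpha^{-1}}Q\cong e_{\lambda^{-1}}k\wt{Q}$-modules to $k\wt{Q}$-modules on which $Z$ acts by $\lambda^{-1}$, the module $V$ corresponds to an endo-trivial $k\wt{Q}$-module $W$, and I set $\Phi_{G,\wt{Q}}([M]):=[W]$.

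Next I would check that $\Phi_{G,\wt{Q}}$ is a well-defined homomorphism descending to $T(G)/X(G)$ and restricting to $K(G)/X(G)$. A different choice of $S^{(\alpha)}$ twists $V$, hence $W$, by a one-dimensional $kQ$-module, so $[W]$ is well defined modulo $X(\wt{Q})$. For additivity, the $G$-invariance of the constituents shows that if $M,M'$ have data $(S,\alpha,V),(S',\alpha',V')$ then the projective-free part of $M\otimes_k M'$ has data $(S'',\alpha\alpha',V'')$ with $S''$ a $G$-invariant constituent of $S\otimes_k S'$ and $V''$ equal, modulo $X$ and projectives, to $V\otimes_k V'$ read inside $k_{(\alpha\alpha')^{-1}}Q$; translating back, $\Phi_{G,\wt{Q}}$ is additive. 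A one-dimensional $kG$-module has $S$ a linear character, $\alpha$ trivial, $V$ one-dimensional and $W=\Inf_Q^{\wt{Q}}$ of a one-dimensional $kQ$-module, so $X(G)$ maps into $X(\wt{Q})$; and for $M=\Inf_Q^G(W_0)$ one has $S=k$, $\alpha$ trivial, $V=W_0$, giving $\Phi_{G,\wt{Q}}([\Inf_Q^G(W_0)])=[\Inf_Q^{\wt{Q}}(W_0)]$, the asserted formula. If $M$ is a trivial source module, then $\Res^G_P(M)\cong k\oplus\proj$; as $\alpha$ restricts trivially to the $p$-group $P$, the module $S^{(\alpha)}\resgh{G}{P}$ is an honest $kP$-module, and a vertex-source argument shows it is trivial and $V$ a trivial source $kQ$-module, so $W$ is a trivial source $k\wt{Q}$-module. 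This proves (b); part (c) is the special case $K(\wt{Q})=X(\wt{Q})$.

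It remains to establish injectivity, which I expect to be the main obstacle. If $\Phi_{G,\wt{Q}}([M])\in X(\wt{Q})$ then $W$, hence $V$, is one-dimensional; a one-dimensional module over a twisted group algebra forces its factor set to be a coboundary, so $\alpha$ is trivial, $S^{(0)}:=S^{(\alpha)}$ is an honest extension of $S$ to $G$, and $M\cong S^{(0)}\otimes_k V$ with $\dim M=\dim S$. Tensoring off the one-dimensional $V$, $S^{(0)}=M\otimes_k V^*$ is itself endo-trivial, and since $\Res^G_N(S^{(0)})=S$ is simple, $S^{(0)}$ is a simple endo-trivial $kG$-module whose restriction to $O_{p'}(G)$ remains simple; one must then show $\dim S=1$, for then $[M]\in X(G)$ and we are done. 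I would attack this by restricting $\End_k(S^{(0)})\cong k\oplus\proj$ to $N$: since $\End_k(S^{(0)})\resgh{G}{N}\cong\End_k(S)$ contains the trivial $kN$-module exactly $\dim\End_{kN}(S)=1$ times, the projective summand must avoid every simple $kG$-module inflated from $Q$; combining this constraint with the $p$-local structure controlled by the hypotheses (no strongly $p$-embedded subgroup, $p$-rank $\geq 2$, together with Lemma~\ref{lem:Op(G)} and the Green correspondence) should force $\dim S=1$. This, together with the appearance of strongly $p$-embedded subgroups already in the structural step, is where the standing hypotheses on $G$ are genuinely used, and it is the hardest point of the argument.
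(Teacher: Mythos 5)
The paper itself gives no proof of Theorem~\ref{thm:LT17}: it is imported as a black box from \cite[Theorem~1.1]{LT17etcentralext}, so the only meaningful comparison is with the proof in that reference. Your architecture --- Clifford theory over $N:=O_{p'}(G)$; the observation that the inertia group $I$ of a simple constituent of $\Res^G_N(M)$ contains a Sylow $p$-subgroup and would be strongly $p$-embedded if proper (via the standard criterion for $\Ind_I^G$ of a module to be endo-trivial); the determinant argument placing the Clifford obstruction $\alpha$ in $H^2(Q,k^\times)_{p'}$; and the passage to a block of $k\wt{Q}$ --- is indeed the strategy of the cited source, and that part of your write-up is essentially sound.

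The genuine gap is injectivity, which you yourself flag. You correctly reduce it to the claim that an indecomposable endo-trivial $kG$-module $S^{(0)}$ with $\Res^G_N(S^{(0)})$ simple must be one-dimensional, but this is precisely where the theorem lives --- everything before it is formal bookkeeping --- and you offer no proof, only the hope that the accumulated constraints ``should force $\dim S=1$''. They do not obviously do so: what you actually extract (the projective complement $L$ of $k$ in $\End_k(S^{(0)})$ has no composition factor inflated from $Q$, together with $(\dim S)^2\equiv 1\pmod{|P|}$) is a priori consistent with $\dim S>1$, and the hypotheses ($p$-rank at least $2$, no strongly $p$-embedded subgroup) must enter again in an essential, non-routine way at exactly this point; in \cite{LT17etcentralext} this is a substantial argument, not a corollary of the set-up. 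Two smaller soft spots: the claimed ``direct check'' that $M$ is endo-trivial \emph{iff} $V$ is endo-trivial only works in the forward direction (the converse would require $\End_k(S^{(\alpha)})\cong k\oplus\proj$, which is not established and is essentially equivalent to $S^{(\alpha)}$ itself being endo-trivial); and in part (b) the assertion that $\Res^G_P\bigl(S^{(\alpha)}\bigr)$ is trivial ``by a vertex-source argument'' is unjustified --- from $\Res^G_P(M)\cong k\oplus\proj$ one only gets $[\Res^G_P S^{(\alpha)}]=-[\Res^{\wt{Q}}_{\wt{P}}W]$ in $T(P)$, and showing that both classes vanish is again a real step, not a formality.
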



\noindent Finally, we will apply a recent result obtained by David Craven in \cite{CravenET}, which gives a purely group-theoretic method in order to use  Grodal's homotopy-theoretical description of $K(G)$ in   \cite[Theorem 4.27]{GrodalET}.

\begin{lem}[{}{\cite[Section~2]{CravenET}}]\label{lem:Craven}
Let $G$ be a finite group and let $P$ be a Sylow $p$-subgroup of $G$. If $K_G^{\circ}=N_G(P)$, then $K(G)\cong \{[k]\}$.
\end{lem}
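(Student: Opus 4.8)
The plan is to deduce this from J.~Grodal's homotopy-theoretic computation of $K(G)$ in \cite[Theorem~4.27]{GrodalET}, via the purely group-theoretic translation carried out by D.~Craven in \cite[Section~2]{CravenET}; since the statement is quoted from the latter source, we only indicate the line of argument. Grodal's theorem identifies $K(G)$ with a first cohomology group of the orbit category on the nontrivial $p$-subgroups of $G$ with coefficients in the constant functor $k^{\times}$, and in particular shows that this group is controlled $p$-locally: a class of $K(G)$ is represented by its Green correspondent with respect to $(G,P,N_G(P))$, which, being an indecomposable trivial source $kN_G(P)$-module with vertex $P$ whose restriction to $P$ is endo-trivial, hence a direct summand of $\Ind_P^{N_G(P)}(k)=\Inf^{N_G(P)}_{N_G(P)/P}\bigl(k[N_G(P)/P]\bigr)$ that restricts to $P$ as $k\oplus\proj$ with $\proj=0$, is forced to be one-dimensional. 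Taking Green correspondents therefore yields an injective group homomorphism
$$\iota\colon K(G)\lra \Hom(N_G(P),k^{\times})=X(N_G(P))\,.$$

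The key step is to pin down the image of $\iota$. Here one uses that, by Grodal's description, the character $\iota([M])$ of $N_G(P)$ is not arbitrary: it forms part of a coherent system of characters $\phi_Q$ of the local normalizers $N_G(Q)$, for $N_G(P)$-classes of subgroups $1<Q\leq P$, compatible under the structure maps of the orbit category. Since $k^{\times}$ is $p$-torsion-free, any homomorphism from a finite group to $k^{\times}$ factors through its largest $p'$-quotient and hence kills the $p'$-perfect subgroup $O^{p'}(-)$; thus each $\phi_Q$ is trivial on $O^{p'}(N_G(Q))$, and the compatibility of the system forces $\iota([M])$ to be trivial on $N_G(P)\cap O^{p'}(N_G(Q))$ for every $N_G(P)$-class of subgroups $1<Q\leq P$. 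Consequently $\iota([M])$ is trivial on the normal subgroup $K_G^{\circ}\unlhd N_G(P)$ generated by all these intersections, so that $\iota$ factors through an injection
$$K(G)\hookrightarrow \Hom\bigl(N_G(P)/K_G^{\circ},\,k^{\times}\bigr)\,.$$

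Finally, if $K_G^{\circ}=N_G(P)$, then $N_G(P)/K_G^{\circ}$ is the trivial group, the right-hand side above is trivial, and therefore $K(G)=\{[k]\}$, as claimed. The main obstacle is the middle step: translating the coherence conditions appearing in Grodal's orbit-category cohomology into the explicit assertion that the Green-correspondent character must vanish on each $N_G(P)\cap O^{p'}(N_G(Q))$. This is exactly the technical content of \cite[Section~2]{CravenET}, and it requires careful bookkeeping of how the normalizers $N_G(Q)$ of the various nontrivial $p$-subgroups $Q\leq P$ interact inside $N_G(P)$ and in which way each of them constrains a class of $K(G)$.
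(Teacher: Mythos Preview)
Your proposal is correct and follows essentially the same approach as the paper: both deduce the claim from Craven's result that $K(G)$ injects into (in fact is isomorphic to) $\bigl(N_G(P)/K_G^{\circ}\bigr)^{\mathrm{ab}}$, whence $K_G^{\circ}=N_G(P)$ forces $K(G)=\{[k]\}$. The paper simply cites \cite[Theorem~2.3 and the remark before it]{CravenET} for this, whereas you additionally sketch the mechanism behind Craven's translation of Grodal's theorem; your Green-correspondence argument that the correspondent is one-dimensional is correct (since $P\trianglelefteq N_G(P)$, any indecomposable trivial source $kN_G(P)$-module with vertex $P$ restricts to a sum of trivial modules on $P$, and endo-triviality forces this sum to have a single summand).
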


\begin{proof}
Assuming $K_G^{\circ}=N_G(P)$, by \cite[Theorem~2.3 and the remark before Theorem~2.3]{CravenET} we have 
$$K(G)\cong \left(N_G(P)/K_G^{\circ}\right)^{\text{ab}}=1\,.$$ The claim follows. 
\end{proof}


\vspace{4mm}
\section{Some properties of groups with semi-dihedral Sylow $2$-subgroups}

From this point forward and for the remainder of this article, we assume that the field~$k$ has characteristic $p=2$.\\

\noindent 
The following result provides a classification of finite groups with semi-dihedral Sylow $2$-subgroups, which was obtained by Benjamin Sambale in an upublished note as a byproduct of the results of Alperin-Brauer-Gorenstein in \cite{ABG}. A proof can be found in \cite[Theorem~3.1]{KLS20}.

\begin{prop}[{}{\cite[Theorem~3.1]{KLS20}}]\label{prop:classificSD}
Let $G$ be a finite group with a  Sylow $2$-subgroup isomorphic to ${\mathrm{SD}}_{2^m}$ ($m\geq 4$) and $O_{2'}(G) = 1$. Let $r$ be a prime number and $q=r^n$ be a positive power of $r$. Then one of the following holds:
\begin{enumerate}
\item[{\bf \rm(SD1)}] $G\cong {\mathrm{SD}}_{2^m}$;
\item[{\bf \rm(SD2)}] $G\cong\mathrm{M}_{11}$ and $m=4$;
\item[{\bf \rm(SD3)}] $G\cong \SL(2,q,\pm 1)\rtimes C_d$ where  $q\equiv -1\pmod{4}$, $d\mid n$ is odd and $(q + 1)_2 = 2^{m-2}$;
\item[{\bf \rm(SD4)}] $G\cong \SU(2,q,\pm 1)\rtimes C_d$ where $q\equiv 1\pmod{4}$, $d\mid n$ is odd and and $(q - 1)_2 = 2^{m-2}$;
\item[{\bf \rm(SD5)}] $G\cong\PGL^{\ast}_2(q^2)\rtimes C_d$ where $r$ is odd, $d\mid n$ is odd and  $(q^2 - 1)_2 = 2^{m-1}$;
\item[{\bf \rm(SD6)}] $G\cong\PSL^{\varepsilon}_3(q).H$ where $q\equiv -\varepsilon\pmod{4}$, $H\leq C_{(3,q-\varepsilon)}\times C_n$ has odd order and $(q + \varepsilon)_2 = 2^{m-1}$.
\end{enumerate}
\end{prop}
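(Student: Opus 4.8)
The plan is to read the list off from the classification of Alperin--Brauer--Gorenstein~\cite{ABG} of the finite groups with quasi-dihedral (i.e.\ semi-dihedral) Sylow $2$-subgroups, as observed by B.~Sambale~\cite{SAMBALEunpub}. Since $O_{2'}(G)=1$ by hypothesis, the generalised Fitting subgroup satisfies $F^{*}(G)=O_2(G)\,E(G)$ with $C_G(F^{*}(G))\leq F^{*}(G)$; the strategy is to pin down $F^{*}(G)$ first and then to enumerate the groups $G$ with $F^{*}(G)\unlhd G$ that possess a semi-dihedral Sylow $2$-subgroup.

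First I would treat the case $E(G)=1$, where $F^{*}(G)=O_2(G)\leq P$ and $G/Z(O_2(G))$ embeds into $\Aut(O_2(G))$. Since $\Aut(Q)$ is a $2$-group for every subgroup $Q\leq\mathrm{SD}_{2^m}$ other than $Q\cong Q_8$ or $Q\cong C_2\times C_2$, and these two exceptions force $m=4$ on comparing orders, one obtains that either $G\cong\mathrm{SD}_{2^m}$, which is \textbf{(SD1)}, or $m=4$ and $G$ has $\SL_2(3)$ as a normal subgroup --- a situation recovered below as the case $q=3$ of \textbf{(SD3)}. When $E(G)\neq 1$, one uses the fact that every subgroup of $\mathrm{SD}_{2^m}$ has a cyclic subgroup of index at most $2$, whereas a Sylow $2$-subgroup of a central product of two non-trivial quasisimple groups does not, to conclude that $E(G)$ has a single quasisimple component $L$; moreover $O_2(G)\leq Z(L)$, so that $F^{*}(G)=L\unlhd G$ and $G/Z(L)$ embeds into $\Aut(L/Z(L))$.

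It then remains to run through the possibilities for $L$ furnished by~\cite{ABG}, whose proof itself rests --- via reductions to normal subgroups of index~$2$ --- on the classifications of the finite groups with generalised quaternion (Brauer--Suzuki, Glauberman) resp.\ dihedral (Gorenstein--Walter) Sylow $2$-subgroups. These leave $L$ isomorphic to one of $\mathrm{M}_{11}$, $\SL_2(q)$ with $q\equiv -1\pmod 4$ and $q>3$, $\SU_2(q)$ with $q\equiv 1\pmod 4$, $\PSL_2(q^2)$ with $r$ odd, $\PSL_3(q)$ with $q\equiv 3\pmod 4$, and $\PSU_3(q)$ with $q\equiv 1\pmod 4$; the full Schur covers $3.\PSL_3^{\varepsilon}(q)$ are excluded here since their centre is a non-trivial $2'$-group. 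For each such $L$ I would then list the groups $G$ with $L=F^{*}(G)\unlhd G$ having a semi-dihedral Sylow $2$-subgroup. The group $\mathrm{M}_{11}$ is already maximal of this kind and forces $m=4$, giving \textbf{(SD2)}. The simple groups $\PSL_3(q)$ and $\PSU_3(q)$ themselves have semi-dihedral Sylow $2$-subgroups, and among their outer automorphisms only those of odd order (diagonal and field) preserve this isomorphism type --- the graph automorphism and the even-order field automorphisms do not --- whence $G\cong L.H$ with $H\leq C_{(3,q-\varepsilon)}\times C_n$ of odd order, i.e.\ \textbf{(SD6)}. For $L\in\{\SL_2(q),\SU_2(q)\}$, which have generalised quaternion Sylow $2$-subgroups, a semi-dihedral Sylow $2$-subgroup arises exactly on adjoining the outer involution acting by a ``semi-dihedral-type'' automorphism --- realised inside $\SL^{\pm1}_2(q)$, resp.\ $\SU^{\pm1}_2(q)$ --- together with the odd-order field automorphisms, giving \textbf{(SD3)} and \textbf{(SD4)}. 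Finally, $\PSL_2(q^2)$ has dihedral Sylow $2$-subgroups, and the only outer automorphism of order~$2$ yielding a semi-dihedral one is the exotic product of the diagonal automorphism and the $\IF_q$-field automorphism; its adjunction gives $\PGL^{\ast}_2(q^2)$, and adjoining the odd-order field automorphisms as well gives \textbf{(SD5)}. In each case a short computation of $|G|_2$ then produces the stated identities $4(q+1)_2=2^m$, $4(q-1)_2=2^m$, $2(q^2-1)_2=2^m$ and $4(q+\varepsilon)_2=2^m$.

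The argument involves no single deep step; the work lies in the bookkeeping. On the one hand one must extract from~\cite{ABG}, whose conclusions are phrased in terms of $2$-local structure and obtained through a chain of reductions, the clean list of possible generalised Fitting subgroups, importing the quaternion and dihedral classifications exactly where~\cite{ABG} does. On the other hand --- and this is the most delicate point --- for each admissible $L$ one has to determine precisely which almost-simple overgroups $L\unlhd G$ retain a semi-dihedral Sylow $2$-subgroup, which amounts to ruling out every outer automorphism of even order save the exotic one attached to $\PSL_2(q^2)$. Once these two points are settled, the numerical relations among $q$, $n$, $d$ and $m$, and the final enumeration \textbf{(SD1)}--\textbf{(SD6)}, follow by routine computation.
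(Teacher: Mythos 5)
The paper offers no proof of this proposition to compare against: it is stated as a quoted result, attributed to Sambale's unpublished observation \cite{SAMBALEunpub} and presented as a byproduct of \cite{ABG}. Judged on its own terms, your outline follows the standard and correct route (pin down $F^{*}(G)$ using $O_{2'}(G)=1$, show $E(G)$ has at most one component, then enumerate the admissible almost-quasisimple overgroups), and the individual steps you do spell out --- the $\Aut(O_2(G))$ argument in the $E(G)=1$ branch, recovering $\GL_2(3)=\SL_2^{\pm1}(3)$ as the $q=3$ instance of \textbf{(SD3)}, the exponent/cyclic-subgroup argument against two components, and the exclusion of the $2'$-central covers $3.\PSL_3^{\varepsilon}(q)$ --- are all sound.

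Two points in the bookkeeping you defer are genuine and should be named rather than waved at. First, your candidate list for the component $L$ is incomplete as written: the Gorenstein--Walter and Brauer--Suzuki--Glauberman classifications you import also allow $L\cong\fA_7$ (Sylow $D_8$), $L\cong 2.\fA_7$ (Sylow $Q_{16}$), and $\PSL_2(q_0)$ with Klein four Sylow $2$-subgroups ($q_0\equiv\pm3\pmod 8$); each must be excluded. The efficient way is to note that $S_L:=L\cap P$ is normal in $P$ (the unique component is normal in $G$), and the only non-cyclic proper normal subgroups of $\mathrm{SD}_{2^m}$ are the two maximal subgroups $D_{2^{m-1}}$ and $Q_{2^{m-1}}$; this kills the Klein four case outright, forces $|P:S_L|\le 2$, and then $\fA_7$ and $2.\fA_7$ die because neither $\fS_7$ nor either isoclinism type of $2.\fS_7$ has a semi-dihedral Sylow $2$-subgroup. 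Second, the assertion $O_2(G)\le Z(L)$ does not follow from $[F(G),E(G)]=1$ alone; it needs the additional observation that $O_2(G)\le C_P(S_L)=Z(S_L)\le S_L\le L$, which holds because $S_L$ is of maximal class of order at least $8$ (so its centralizer in $P$ is its own centre). With these two repairs, and the routine order computations you indicate, the sketch does yield the stated list \textbf{(SD1)}--\textbf{(SD6)}.
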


\noindent In addition, two crucial results for the present article are given by the following lemma and proposition, which will allow us to apply Theorem~\ref{thm:KoLa15} and Theorem~\ref{thm:LT17}.

\begin{lem}\label{lem:stpemb}
A finite group with a semi-dihedral Sylow $2$-subgroup does not admit any strongly $2$-embedded subgroup.
\end{lem}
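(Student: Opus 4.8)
The plan is to use the classification of finite simple groups with a strongly $2$-embedded subgroup together with structural facts about semi-dihedral $2$-groups. Recall that a group $H$ possesses a strongly $2$-embedded subgroup if and only if it has more than one conjugacy class of involutions fused inside a proper subgroup, equivalently (by Bender's theorem) $O^{2'}(H/O_{2'}(H))$ is, modulo its centre, one of $\PSL_2(2^a)$, $\mathrm{Sz}(2^a)$, or $\PSU_3(2^a)$ with $a\geq 2$, or else $H$ has a cyclic or generalised quaternion Sylow $2$-subgroup. The key observation is that none of these possibilities is compatible with a semi-dihedral Sylow $2$-subgroup: a cyclic or generalised quaternion group is not semi-dihedral, and the Sylow $2$-subgroups of $\PSL_2(2^a)$, $\mathrm{Sz}(2^a)$ and $\PSU_3(2^a)$ are elementary abelian, of Suzuki type, and of the specific non-semi-dihedral type occurring in $\PSU_3(2^a)$ respectively, none of which is semi-dihedral (for one thing, a semi-dihedral $2$-group is non-abelian with a cyclic maximal subgroup and exactly one conjugacy class of non-central involutions, which rules out all three families).

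First I would reduce to the simple case: if $G$ has a strongly $2$-embedded subgroup then so does $G/O_{2'}(G)$, and moreover a strongly $2$-embedded subgroup is inherited by the relevant normal subgroup generated by a Sylow $2$-subgroup; the standard reduction (see e.g. the account in Gorenstein--Lyons--Solomon, or \cite[Chapter~9]{MazzaBook}) shows that one may assume $G$ is almost simple with socle $L$ a simple group admitting a strongly $2$-embedded subgroup, and that a Sylow $2$-subgroup of $L$ has the same isomorphism type as that of $G$ up to index-$2$ considerations. Since a semi-dihedral $2$-group has no strongly $2$-embedded section issue coming from small groups, the only thing to check is that $\mathrm{SD}_{2^m}$ does not appear as a Sylow $2$-subgroup of any of $\PSL_2(2^a)$, $\mathrm{Sz}(2^a)$, $\PSU_3(2^a)$ (nor is it cyclic or generalised quaternion). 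This is immediate on inspection of orders and structure.

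Alternatively, and perhaps more cleanly, I would invoke Proposition~\ref{prop:classificSD} directly: a group $G$ with semi-dihedral Sylow $2$-subgroup and $O_{2'}(G)=1$ is one of (SD1)--(SD6), and one checks case by case that none of these groups admits a strongly $2$-embedded subgroup — for (SD1) the group is a $2$-group, for (SD2) it is $\mathrm{M}_{11}$ which has a unique class of involutions, and for (SD3)--(SD6) the groups have a normal subgroup of the form $\SL_2$, $\SU_2$, $\PGL_2^\ast$ or $\PSL_3^{\varepsilon}$ whose centraliser structure of involutions is well understood and precludes strong $2$-embedding (these groups all have $2$-rank $2$ with a single class of involutions in the relevant cases, or the $2$-local structure is visibly connected). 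Passing from $O_{2'}(G)=1$ to the general case is routine since strong $2$-embedding is insensitive to $O_{2'}$.

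The main obstacle is purely bookkeeping: assembling the correct form of the classification of strongly $2$-embedded groups and the precise reduction statement from the almost-simple case, and then matching it against the list in Proposition~\ref{prop:classificSD}. No genuinely hard argument is needed — the semi-dihedral Sylow $2$-subgroup is simply too large and too non-abelian (yet not quaternion) to fit the narrow list of possibilities, so the lemma falls out once the right references are in place. I expect the actual write-up to be a short paragraph citing the classification of strongly $2$-embedded groups and remarking that $\mathrm{SD}_{2^m}$ is neither cyclic, generalised quaternion, nor a Sylow $2$-subgroup of $\PSL_2(2^a)$, $\mathrm{Sz}(2^a)$ or $\PSU_3(2^a)$.
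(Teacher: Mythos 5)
Your proposal is correct and follows essentially the same route as the paper: both invoke the Bender--Suzuki classification of groups with a strongly $2$-embedded subgroup and then observe that none of the resulting possibilities (cyclic or generalised quaternion Sylow $2$-subgroups, or a normal subgroup of odd index of type $\PSL_2(2^a)$, $\mathrm{Sz}(2^a)$, $\PSU_3(2^a)$) is compatible with a semi-dihedral Sylow $2$-subgroup. The only cosmetic difference is that the paper verifies the incompatibility by comparison with the Alperin--Brauer--Gorenstein list in Proposition~\ref{prop:classificSD}, whereas you primarily argue directly from the structure of the Sylow $2$-subgroups of the groups on Bender's list; your longer ``reduction to the almost simple case'' paragraph is unnecessary since Bender's theorem already applies to arbitrary finite groups.
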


\begin{proof}
The Bender-Suzuki theorem \cite[Satz~1]{Bender71} 
states that a finite group  $G$ with a strongly $2$-embedded subgroup $H$ has one of the following forms:
\begin{itemize}
\item[\rm1.] $G$ has cyclic or generalised quaternion Sylow 2-subgroups and $H$ contains the centraliser of an involution; or 
\item[\rm2.] $G/O_{2'}(G)$ has a normal subgroup of odd index isomorphic to one of the simple groups $\PSL_2(q)$, $\mathrm{Sz}(q)$ or $\PSU_3(q)$ where $q\geq 4$ is a power of $2$ and $H$ is $O_{2'}(G)N_G(P)$ for a Sylow $2$-subgroup $P$ of $G$.
\end{itemize}
Therefore, it follows from Proposition~\ref{prop:classificSD} that such a group cannot admit a semi-dihedral Sylow $2$-subgroup. 
\end{proof}

\begin{prop}\label{prop:Smult}
Let $G$ be a finite group with a semi-dihedral Sylow $2$-subgroup $P\cong {\mathrm{SD}}_{2^m}$  for some $m\geq 4$ and $O_{2'}(G)=1$. Let $r$ be a prime number and $q=r^n$ be a positive power of $r$.
 Then the following assertions hold.
\begin{enumerate}
\item[\rm(a)] If $G={\mathrm{SD}}_{2^m}$, then \smallskip $H^2(G, k^\times)=1$.

\item[\rm(b)] If $G={\mathrm M}_{11}$, then  \smallskip  $H^2(G, k^\times)=1$.

\item[\rm(c)] If $G=\SL(2,q,\pm 1)\rtimes C_d$ where  $q\equiv -1\pmod{4}$ and $d\mid n$ is odd, then  \smallskip ${H^2(G, k^\times)=1}$.

\item[\rm(d)] If $G=\SU(2,q,\pm 1)\rtimes C_d$ where   $q\equiv 1\!\pmod{4}$ and $d\mid n$ is odd, then  \smallskip  ${H^2(G, k^\times)\!=\!1}$, unless $G=\SU(2,9,\pm 1)$, in which case  \smallskip  $H^2(G, k^\times)\cong C_3$.

\item[\rm(e)]  If $G=\PGL_2^*(q^{2})\rtimes C_d$ where $q^{2}$ is odd and $d\mid n$ is odd,  then  \smallskip  ${H^2(G, k^\times)=1}$, unless $G=\PGL_2^*(9)$, in which case \smallskip $H^2(G, k^\times)\cong C_3$.

\item[\rm(f)]
If $G=\PSL^{\varepsilon}_3(q).H$ where  $q\equiv -\varepsilon \pmod{4}$ and  $H\leq C_{(3,q-\varepsilon)}\times C_n$ has odd order, then
 $|H^2(G,k^{\times})| \big| (3,q-\varepsilon)$ if $H$ is cyclic, and $|H^2(G,k^{\times})| \big| 9$ else.
\end{enumerate}
\end{prop}

\begin{proof}
Set  $h:=|H^2(G, k^\times)|$. In order to compute $h$ we recall that $H^2(G, k^\times)\cong M(G)_{2'}$ (see \cite[Proposition 2.1.14]{Karp}) and  if $N\unlhd G$ such that $G/N$ is cyclic then, by \cite[Theorem~3.1(i)]{JONES}, we have
\[
|M(G)|\, \Big|\, |M(N)|\cdot |N/[N,N]| \,. \tag{$\ast$}
\]
We compute:
\begin{enumerate}
\item[\rm(a)] Because the Schur multiplier of a cyclic group is trivial, we obtain from 
\cite[Theorem 2.1.2(i)]{Karp} (or \cite[Corollary 5.4]{ISAACS}) that if $p$ is an odd prime divisor of $|M(G)|$ then a Sylow $p$-subgroup of $G$ must be noncyclic, hence \smallskip $|M(G)_{2'}|=1$.
\item[\rm(b)] See the \smallskip  ATLAS \cite[p.18]{ATLAS}. 
\item[\rm(c)]  First, we have  $\SL(2,q,\pm 1)\cong\SL_2(q).2$ (see \cite[Chapter I, p.4]{ABG}). Thus ($\ast$) yields 
$$M( \SL(2,q,\pm 1) )=1$$
since  $\mathrm{SL}_2(q)$ is perfect and has a trivial Schur multiplier as  $q\equiv 3 \pmod{4}$ (see \cite[7.1.1.Theorem]{Karp}).
Therefore, we may apply ($\ast$) again to $G=N\rtimes C_d$ with $N= \SL(2,q,\pm 1)$. Because $|N/[N,N]|=2$ we obtain
$$|M(G)| \,\Big|\, |M(N)|{\cdot} |N/[N,N]|=2$$
and it follows that \smallskip  $h=|M(G)_{2'}|=1$.
\item[\rm(d)] We have ${\mathrm{SU}}(2,q,\pm1) \cong {\mathrm{SU}}_2(q).2$ (see \cite[p.4]{ABG}) and 
${\mathrm{SU}}_2(q)\cong {\mathrm{SL}}_2(q)$. Therefore, if $q\neq 9$ we have  $M(\SU_2(q))\cong M(\SL_2(q))=1$ (see \cite[7.1.1.Theorem]{Karp}) and  the claim follows by the same argument as  in~{\rm(c)}, applying ($\ast$)  twice.\\
Next, if $q=9$, then we first observe that necessarily $d=1$. Moreover, we claim  that $\SU(2,9,\pm 1)\cong 2.\PGL_2(9)$. Indeed, as $\PSL_2(9)\cong \fA_6$,  by \cite[diagram on~p.4]{ABG}, 
$Q:=\SU(2,9,\pm1) \cong C_2.\PSL_2(9).C_2\cong C_2.\mathfrak A_6.C_2$\,. Hence, we read from the ATLAS  \cite[p.4]{ATLAS} that 
$$Q\in\{2.\mathfrak A_6.2_1 = 2.\mathfrak S_6, \ 
2.\mathfrak A_6.2_2=2.\PGL_2(9),\,2.\mathfrak A_6.2_3=2.\PGL_2^{\ast}(9) \}\,.$$ 
(Here we use the ATLAS notation.) 
Considering the quotient  $\bar Q:=Q/Z(Q)=Q/C_2$ and letting $\bar P\in\Syl_2(\bar Q)$, we have
$\bar  P \cong D_8\!\times\!C_2$ if $Q\cong 2.\mathfrak S_6$, $\bar P\cong \mathrm{D}_{16}$ if $Q\cong 2.\PGL_2(9)$, and $\bar P\cong \mathrm{SD}_{16}$ if ${Q\cong 2.\PGL_2^{\ast}(9)}$\,. 
However, as a Sylow $2$-subgroup of $Q$ is semi-dihedral of order~32 and  $\mathrm{SD}_{32}/Z(\mathrm{SD}_{32}) \cong \mathrm{D}_{16}$, we must have $Q\cong 2.\PGL_2(9)$.  
Hence, by \cite[2.1.15 Corollary]{Karp} and the ATLAS \cite[p.4]{ATLAS}, we obtain that 
$$M(G)_{2'}\cong  C_3\,.$$ 
\item[\rm(e)] We have ${\mathrm{PGL}}^*_2(q^{2})={\mathrm{PSL}}_2(q^{2}).2$ (see  \cite[p.335]{Gor69}). 
Now, if $q^{2}\neq 9$, then  $\PSL_2(q^{2})$ is perfect and has a Schur multiplier of order $2$. Therefore applying ($\ast$) twice as in (c) it follows that $h=|M(G)_{2'}|=1$.
If $q^2=9$, then necessarily $d=1$, and  because ${\mathrm{PGL}}^*_2 (9) \cong \fA_6.2$ we read from the ATLAS \cite[p.4]{ATLAS} that   \smallskip  $h=3$.
\item[\rm(f)] Write $G=N.H$ with $N:=\PSL^{\varepsilon}_3(q)$ ($q\equiv -\varepsilon \pmod{4}$). 
First assume that $H$ is cyclic. Because $N$ is perfect, by  ($\ast$), we have that
$$|M(G)|\, \Big|\, |M(N)|=(3,q-\varepsilon)\,$$
as $M(N)\cong C_{(3,q-\varepsilon)}$  (see e.g. \cite[Chapter~2]{ATLAS}). Hence $h=|M(G)_{2'}| \big| (3,q-\varepsilon)$\,. 
Next, if  $H$ is not cyclic, we have  $H\cong C_3\times C_a$ with $3\mid a$.
For  $X:=N.C_3$  we obtain that  $|M(X)|\big| |M(N)|=3$ by  ($\ast$).
Then  applying ($\ast$) a second time, we get
$$|M(G)|\, \Big|\, |M(X)|\cdot |X/[X,X]|=9\,.$$
Hence $h=|M(G)_{2'}| \big| 9$\,.
\end{enumerate}
\end{proof}

\begin{rem}
We note that if in case (SD6) of Proposition~\ref{prop:classificSD} the extension $G=\PSL_3^{\varepsilon}(q).H$ is split, then it follows from a general result of K. Tahara \cite[Theorem 2]{Tahara} on the second cohomology groups of semi-direct products that $H^2(G,k^{\times})\cong C_{(3,q-\varepsilon)}$ if $H$ is cyclic and $H^2(G,k^{\times})\cong C_3\times C_3$ if $H$ is non-cyclic.
\end{rem}


\enlargethispage{5mm}

\vspace{2mm}
\section{Endotrivial modules for $\SL^{\varepsilon}_3(q)$ with $q\equiv -\varepsilon\pmod{4}$}

In this section, we compute $K(G)$ for $G=\SL^{\varepsilon}_3(q)$ with $q\equiv -\varepsilon\pmod{4}$ in characteristic~$2$. 
We note that for the special linear group $G=\SL_3(q)$ with $q\equiv -1\pmod{4}$ the structure of $K(G)$ is  given in \cite[Theorem 9.2(b)(ii)]{CMN16}, namely $K(G)=X(G)$.  However, the proof given to this fact in \cite{CMN16} contains an error:  the authors assumed that a Sylow $2$-subgroup $S$ of the simple group $H=\PSL_3(q)$ is always self-normalising, which is not  correct in general, as  $N_H(S)\cong S\times Z$ where $Z$ is a cyclic group of order $(q-1)_{2'}/(q-1,3)$\,. (See e.g. \cite[Corollary on p.2]{Kond05}). More precisely, identifying $\GL_2(q)$ with the subgroup of $\SL_3(q)$ made up of the $2\times 2$ left upper block matrices as below, we have $Z=O_{2'}\big(Z(\GL_2(q))\big)/U(\SL_3(q))$.  For this reason, we treat both $\SL_3(q)$ and $\SU_3(q)$.\\

Throughout  this section we let $G:=\SL^{\varepsilon}_3(q)$ and  $\widetilde{G}:=\GL_3^{\varepsilon}(q)$  with $q\equiv -\varepsilon\pmod{4}$ an odd prime power and we
define $\overline{q}$ to be $q$ if $\varepsilon= 1$ and $q^2$ if $\varepsilon=-1$.  Furthermore, we let
$$\imath:\GL_2^{\varepsilon}(q)\lra \SL_3^{\varepsilon}(q),\, a\mapsto \left(\begin{matrix}a & 0 \\ 0 & \det(a)^{-1}\end{matrix}\right)$$
be the natural embedding. 
Now, in order to describe the normaliser of $P\in\Syl_2(G)$, we follow the procedure described in \cite[Sections~7~and~8]{ST18}  to obtain $N_G(P)$ from the normaliser $N_{\wt{G}}(\wt{P})$ of a Sylow $2$-subgroup $\wt{P}$  of $\wt{G}$ as given by Carter-Fong \cite{CarterFong}. 
Firstly, as the $2$-adic expansion of~$3$ is $3=2^{r_1}+2^{r_2}$ with $r_1=1$ and $r_2=0$ we have 
 $\widetilde{P}\cong \prod_{i=1}^2 S^{\varepsilon}_{r_i}(q)$ where $S^{\varepsilon}_{r_i}(q)\in \Syl_2(\GL^{\varepsilon}_{2^{r_i}}(q))$ and 
$$N_{\wt{G}}(\wt{P})\cong \wt{P}\times C_{(q-\varepsilon)_{2'}}\times C_{(q-\varepsilon)_{2'}}\,.$$
Concretely, we may assume that $\wt{P}$ is realised by embedding  $\prod_{i=1}^2 S^{\varepsilon}_{r_i}(q)\leq \prod_{i=1}^2 \GL^{\varepsilon}_{2^{r_i}}(q)$ block-diagonally in a natural way. Moreover, for $1\leq j\leq 2$ the corresponding factor $C_{(q-\varepsilon)_{2'}}$ is  embedded as $O_{2'}(Z( \GL^{\varepsilon}_{2^{r_j}}(q) ))$, so that an arbitrary element of $N_{\wt{G}}(\wt{P})$ is of the form $xz$ with $x\in\wt{P}$ and $z$ is a diagonal matrix of the form
$z=\diag(\lambda_1 I_2, \lambda_2)$  with $\lambda_1,\lambda_2\in C_{(q-\varepsilon)_{2'}}\leq \IF^{\times}_{\overline{q}}$ and $I_2$ the identity matrix in $\GL_2(\overline{q})$.

\begin{lem}\label{lem:NormaliserSL3eps}
The following assertions hold.
\begin{enumerate}
\item[\rm(a)] $P:=\wt{P}\cap G=\{\imath(x)\mid x\in S_{1}^{\varepsilon}(q)\}$ is a Sylow $2$-subgroup of $G$ which is normal \smallskip in~$\wt{P}$\,.
\item[\rm(b)]  $N_G(P) = P\times O_{2'}(C_G(P))$ where 
$$O_{2'}(C_G(P))=\{\diag(\lambda_1 I_2,\lambda_1^{-2})\mid \lambda_1\in C_{(q-\varepsilon)_{2'}}\leq\IF_{\overline{q}}^{\times}\}\cong C_{(q-\varepsilon)_{2'}}\,.$$
\item[\rm(c)]  If  $Q\leq G$ is such that $Q\cong C_2\times C_2$, 
then  $O^{2'}(N_G(Q))=N_G(Q)$\,.
\end{enumerate}
\end{lem}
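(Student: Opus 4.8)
The strategy is to carry out the reduction from $\wt G$ to $G$ following the recipe of \cite[Sections~7~and~8]{ST18}, using the explicit description of $N_{\wt G}(\wt P)$ recalled just above the statement. For part~(a), I would first observe that $\imath$ restricts $S_1^{\varepsilon}(q)\in\Syl_2(\GL_2^{\varepsilon}(q))$ to a $2$-subgroup of $G=\SL_3^{\varepsilon}(q)$; since the $2$-part of $|\GL_2^{\varepsilon}(q)|$ equals the $2$-part of $|\SL_3^{\varepsilon}(q)|$ (the extra factor $(q-\varepsilon)$ in passing from $\SL_3^{\varepsilon}$ to $\GL_3^{\varepsilon}$ being absorbed, via the determinant condition, into the two $2'$-central tori in the Carter--Fong description), the group $P:=\imath(S_1^{\varepsilon}(q))$ has full $2$-power order and hence is Sylow in $G$. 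The equality $P=\wt P\cap G$ then follows because $\wt P\cap G$ is a $2$-subgroup of $\wt P$ containing $P$, and comparing orders (or directly: an element $xz\in\wt P\cap G$ with $z=\diag(\lambda_1 I_2,\lambda_2)$ of $2'$-order forces $\lambda_1=\lambda_2=1$ since $z$ must then be a $2$-element) gives $\wt P\cap G=P$. Normality of $P$ in $\wt P$ is immediate: $\wt P\cap G$ is normal in $\wt P$ because $G\unlhd\wt G$.

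\textbf{Part (b).} Here I would compute $N_G(P)$ as $N_{\wt G}(\wt P)\cap G$ — which is legitimate because, by \cite[Sections~7~and~8]{ST18} (or a direct Frattini-type argument using $P\unlhd\wt P$ and $\wt P\in\Syl_2(\wt G)$), normalising $P$ inside $G$ is the same as lying in $N_{\wt G}(\wt P)\cap G$. Intersecting the direct product $N_{\wt G}(\wt P)\cong\wt P\times C_{(q-\varepsilon)_{2'}}\times C_{(q-\varepsilon)_{2'}}$ with $G$: an element $xz$ with $x\in\wt P$ and $z=\diag(\lambda_1 I_2,\lambda_2)$, $\lambda_1,\lambda_2\in C_{(q-\varepsilon)_{2'}}$, lies in $G=\SL_3^{\varepsilon}(q)$ if and only if $\det(x)\lambda_1^2\lambda_2=1$. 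Since $x\in\wt P$ is a $2$-element, $\det(x)$ has $2$-power order, while $\lambda_1^2\lambda_2$ has odd order; hence $\det(x)=1$ and $\lambda_1^2\lambda_2=1$. The first condition says $x\in P$ (as $P=\wt P\cap G$ consists exactly of the determinant-one elements of $\wt P$, via the block structure), and the second forces $\lambda_2=\lambda_1^{-2}$, so $z=\diag(\lambda_1 I_2,\lambda_1^{-2})$. This yields $N_G(P)=P\times\{\diag(\lambda_1 I_2,\lambda_1^{-2})\mid\lambda_1\in C_{(q-\varepsilon)_{2'}}\}$, and the squaring map $\lambda_1\mapsto\lambda_1^{-2}$ is a bijection on $C_{(q-\varepsilon)_{2'}}$ (a group of odd order), so the second factor is cyclic of order $(q-\varepsilon)_{2'}$. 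That this second factor is exactly $O_{2'}(C_G(P))$ follows since it is an abelian $2'$-group centralising $P$ (the matrices $\diag(\lambda_1 I_2,\lambda_1^{-2})$ commute with all of $\imath(S_1^{\varepsilon}(q))$ because $\lambda_1 I_2$ is scalar on the first block), and it is all of $C_G(P)_{2'}$ because $C_G(P)\leq C_{\wt G}(P)$ and one checks $C_{\wt G}(\imath(S_1^{\varepsilon}(q)))$ has its $2'$-part contained in the diagonal torus described above.

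\textbf{Main obstacle.} The only genuinely delicate point is the book-keeping of the determinant/scalar conditions when passing between $\GL$, $\SL$ and the Carter--Fong torus factors — in particular making sure that the two copies of $C_{(q-\varepsilon)_{2'}}$ in $N_{\wt G}(\wt P)$ are glued correctly under the embedding $\imath$ and that no $2$-part leaks between the $\wt P$ factor and the torus factors. Everything else is a routine order count and a direct matrix computation; I would keep the argument uniform in $\varepsilon\in\{\pm1\}$ by working with $\overline q$ and $C_{(q-\varepsilon)_{2'}}\leq\IF_{\overline q}^{\times}$ throughout, exactly as set up in the paragraph preceding the lemma.
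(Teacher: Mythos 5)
Your proof is correct and takes essentially the same approach as the paper: both rest on the Carter--Fong description of $N_{\wt{G}}(\wt{P})$ and the equality $N_G(P)=N_{\wt{G}}(\wt{P})\cap G$ from \cite[\S 8.1]{ST18}, the determinant bookkeeping you spell out being exactly what the paper leaves implicit in ``the claim follows from the above description''. The only cosmetic difference is that the paper first deduces $N_G(P)=PC_G(P)=P\times O_{2'}(C_G(P))$ from the fact that $\Aut(\mathrm{SD}_{2^m})$ is a $2$-group, whereas you identify the odd direct factor with $O_{2'}(C_G(P))$ directly at the end.
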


\begin{proof}
Part (a) is given by \cite[Sections~\S8.1]{ST18}. For part (b), first, as $P$ is semi-dihedral its automorphism group $\Aut(P)$ is a $2$-group and it follows that  
$$N_G(P)=PC_G(P)=P\times Z\qquad \text{with }Z:=O_{2'}(C_G(P))\,.$$ 
Now by \cite[Sections~\S8.1]{ST18}, we have $N_G(P) =  N_{\widetilde{G}}(\widetilde{P})\cap G$ and the claim follows from the above description of $N_{\widetilde{G}}(\widetilde{P})$. \\
Part (c) is obtained as follows. Let $\zeta$ is a generator of the subgroup $C_{(q-\varepsilon)_{2'}}\leq \IF_{\overline{q}}^{\times}$.
Consider the diagonal matrices
\[
u:=\diag(1, -1, -1), v:=\diag(-1, 1, -1),
x:=\diag(\zeta,\zeta,\zeta^{-2}), y:=\diag(\zeta,1,\zeta^{-1})
\in G.
\]
We can set $Q:=\langle u,v\rangle\leq G$ since all subgroups of $G$ 
isomorphic to $C_2\times C_2$ are $G$-conjugate. 
Furthermore, consider the matrices 
$$ t:= \begin{pmatrix} 1&0&0\\ 0&0&1\\ 0&-1&0\end{pmatrix},\quad \,a:= \begin{pmatrix}0&1&0\\0&0&1\\1&0&0\end{pmatrix} \in G\,$$
and set $H:=O^{2'}(N_G(Q))$.
Then we have
$$
a^t = a^2 u v,  \ u^a=v, \ v^a= uv, \ (uv)^a=u, \
x^a = xy^{-3}, \ x^t= x^{-2}y^3, \ y^a=xy^{-2}, \ y^t=x^{-1}y^2.
$$
Clearly $u, v, t\in H$ since $H$ is generated by the $2$-elements of $N_G(Q)$.
Moreover $a\in H$, so that $x, y\in H$ as well. The assertion is proved.
\end{proof}

\begin{prop}\label{prop:SU3}
If $G=\SL^{\varepsilon}_3(q)$ with $q\equiv -\varepsilon\pmod{4}$ an odd prime power, then 
$$K(G)=X(G)=\{[k]\}\,.$$
\end{prop}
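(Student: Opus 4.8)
The plan is to apply Craven's criterion, Lemma~\ref{lem:Craven}, which reduces the statement to the purely group-theoretic equality $K_G^{\circ}=N_G(P)$ for $G=\SL^{\varepsilon}_3(q)$ and $P$ a Sylow $2$-subgroup. By Lemma~\ref{lem:NormaliserSL3eps}(b) we have $N_G(P)=P\times Z$ with $Z=O_{2'}(C_G(P))\cong C_{(q-\varepsilon)_{2'}}$, so it suffices to show that $P\leq K_G^{\circ}$ and $Z\leq K_G^{\circ}$. Recall $K_G^{\circ}$ is generated by the subgroups $N_G(P)\cap O^{2'}(N_G(R))$ as $R$ runs over representatives of the $N_G(P)$-classes of non-trivial subgroups $R\leq P$.

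First I would get $P$ into $K_G^{\circ}$ cheaply: taking $R=P$ itself, since $\Aut(P)$ is a $2$-group (as $P$ is semi-dihedral), $N_G(P)/C_G(P)$ is a $2$-group, so $O^{2'}(N_G(P))\supseteq [N_G(P),N_G(P)]P$... more directly, $N_G(P)=P\times Z$ with $Z$ of odd order forces $O^{2'}(N_G(P))=P$, hence $N_G(P)\cap O^{2'}(N_G(P))=P\leq K_G^{\circ}$. Alternatively one can feed in any non-trivial $2$-subgroup $R$ contained in a suitable bigger $2$-group. The real work is to capture the odd part $Z$. For this I would use the Klein four subgroup $Q=\langle u,v\rangle$ of Lemma~\ref{prop:Q}: one checks $Q\leq P$ (these are diagonal involutions, and up to $G$-conjugacy $P$ contains a conjugate of $Q$, so I may assume $Q\leq P$). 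By Lemma~\ref{prop:Q}(d), $O^{2'}(N_G(Q))=N_G(Q)$, so $N_G(P)\cap O^{2'}(N_G(Q))=N_G(P)\cap N_G(Q)$. Thus it remains to show that $Z\leq N_G(Q)$, equivalently that the odd-order torus $Z=\{\diag(\lambda I_2,\lambda^{-2})\}$ normalises $Q$. But every such diagonal matrix commutes with $u=\diag(1,-1,-1)$ and with $v=\diag(-1,1,-1)$ (all are diagonal), so in fact $Z\leq C_G(Q)\leq N_G(Q)$. Hence $Z\leq N_G(P)\cap O^{2'}(N_G(Q))\leq K_G^{\circ}$, and combined with $P\leq K_G^{\circ}$ we get $K_G^{\circ}=N_G(P)$.

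Applying Lemma~\ref{lem:Craven} then yields $K(G)=\{[k]\}$. Finally, $G=\SL^{\varepsilon}_3(q)$ is perfect, so $X(G)\cong\Hom(G,k^{\times})=\{[k]\}$, giving $K(G)=X(G)=\{[k]\}$ as claimed.

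I expect the main obstacle to be purely bookkeeping: making sure that the conjugate of $Q$ one uses genuinely sits inside the chosen Sylow $2$-subgroup $P$ and that the description of $Z$ from Lemma~\ref{lem:NormaliserSL3eps}(b) is compatible (same coordinates) with the description of $C_G(Q)$ in Lemma~\ref{prop:Q}(b) — once the coordinates are aligned, the containment $Z\leq C_G(Q)$ is immediate because everything in sight is a diagonal matrix. A secondary subtlety is verifying $P\leq K_G^{\circ}$ rigorously rather than hand-wavingly; the clean argument is that $N_G(P)=P\times(\text{odd order})$ forces $O^{2'}(N_G(P))=P$.
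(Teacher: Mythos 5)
Your proposal is correct and follows essentially the same route as the paper: reduce via Craven's criterion to showing $K_G^{\circ}=N_G(P)$, get $P=O^{2'}(N_G(P))\leq K_G^{\circ}$ from $N_G(P)=P\times O_{2'}(C_G(P))$, and capture the odd part using the Klein four group $Q$ together with $O^{2'}(N_G(Q))=N_G(Q)$ from Lemma~\ref{prop:Q}(d). The only cosmetic difference is that the paper obtains $Z\leq C_G(Q)$ via the chain $O_{2'}(C_G(P))\leq C_G(P)\leq C_G(Q)$ rather than by inspecting diagonal matrices, and both arguments share the same (minor) bookkeeping point that $Q$ must be taken inside the chosen $P$.
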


\begin{proof}
Let $P$ and $Q$ be as in Lemma~\ref{lem:NormaliserSL3eps}. 
Thanks to Lemma~\ref{lem:Craven}(b) it is enough to prove that  $N_G(P)=K_G^{\circ}$.
By Lemma~\ref{lem:NormaliserSL3eps}(b),
we have $N_G(P) = P\times O_{2'}(C_G(P))$, hence it is clear that 
$P=O^{2'}(N_G(P))=N_G(P)\cap O^{2'}(N_G(P)) \leq  K_G^{\circ}\,.$ 
Now, 
$$O_{2'}(C_G(P))\leq C_G(P)\leq C_G(Q)\leq N_G(Q)=O^{2'}(N_G(Q))$$
where the last equality holds by Lemma~\ref{lem:NormaliserSL3eps}(c). Thus, 
$$O_{2'}(C_G(P)) \leq N_G(P)\cap O^{2'}(N_G(Q))\leq K_G^{\circ}\,.$$
\end{proof}


\section{Proof of Theorem~\ref{thm:intro}}

Throughout this section we let $G$ denote a finite group such that  $P\in\Syl_2(G)$ is semi-dihedral of order $2^{m}$ for some $m\geq 4$.
We let $r$ be a prime number and $q=r^n$ be  a positive power of $r$.  We write $Q:=G/O_{2'}(G)$.\\

\noindent In order to prove Theorem~\ref{thm:intro}, we go through the possibilities for $Q$ given by Proposition~\ref{prop:classificSD}. As the $2$-rank of $P$ is $2$ and $G$ does not have any strongly $2$-embedded subgroup by Lemma~\ref{lem:stpemb}, we may apply Theorem~\ref{thm:KoLa15} and Theorem~\ref{thm:LT17}. \\

\begin{prop}\label{prop:proofA(a)}
If $G/O_{2'}(G)\ncong \PGL_2^{\ast}(9)$, then \smallskip $K(G)=X(G)$.
\end{prop}

\begin{proof}
To begin with, we assume that $H^2(Q,k^{\times})=1$. In this case it suffices to prove that $K(Q)=X(Q)$, since it then follows from Theorem~\ref{thm:KoLa15}  that $K(G)=X(G)$. 
We go through the possibilities for $Q$ given by Proposition~\ref{prop:classificSD} and Proposition~\ref{prop:Smult} as \smallskip follows. \\
\textsf{Case 1}: $Q\cong P$. Then clearly $K(Q)=\{[k]\}$. (Although in this case $G$ is $2$-nilpotent and hence the fact that $K(G)=X(G)$ is proved \smallskip in~\cite[Theorem 5.1]{CMTpsol}).\\
\textsf{Case 2}: $Q\cong  \mathrm{M}_{11}$. As $\mathrm{M}_{11}$ has a self-normalising Sylow $2$-subgroup it follows from Theorem~\ref{thm:CMT}(e) that $K(Q)=X(Q)=\{[k]\}$.  \smallskip (See  \cite[\S 4.1]{LaMaz15}.)\\
\textsf{Case 3}: $Q\cong  \SL(2,q,\pm 1)\rtimes C_d$ with $q\equiv -1\pmod{4}$ and $d\mid n$ is odd. In this case set $N:= \SL(2,q,\pm 1)$. Then $K(N)=X(N)$ by Lemma~\ref{lem:Op(G)} because $G$ admits a non-trivial central $2$-subgroup $\{\pm I_2\}$. 
Therefore, Lemma~\ref{lem:normal} yields  $K(Q)=X(Q)$, because $N\unlhd Q$ with \smallskip odd~index. \\
\textsf{Case 4}: $Q\cong \SU(2,q,\pm 1)\rtimes C_d$ with $9\neq q\equiv 1\pmod{4}$ and $d\mid n$ is odd.   In this case set $N:= \SU(2,q,\pm 1)$. Then $K(N)=X(N)$ by Lemma~\ref{lem:Op(G)} because $G$ admits a non-trivial central $2$-subgroup $\{\pm I_2\}$. Therefore, Lemma~\ref{lem:normal} yields  $K(Q)=X(Q)$, because $N\unlhd Q$ with \smallskip odd~index. \\
\textsf{Case 5}: $Q\cong \PGL_2^{\ast}(q^{2})\rtimes C_d$ where $q^{2}\,{\not=}\,9$ is odd and $d$ is an odd divisor of $n$.  In this case set $N:=\PGL_2^{\ast}(q^{2})\cong \PSL_2(q^{2}).2$\,. As a Sylow $2$-subgroup of $\PSL_2(q^{2})$  is self-normalising, so is a  Sylow $2$-subgroup of $N$. Therefore  $K(N)=X(N)$ by Theorem~\ref{thm:CMT}(e) and hence it follows from  Lemma~\ref{lem:normal} that \smallskip $K(Q)=X(Q)$. \\
\textsf{Case 6}: $Q\cong \PSL^{\varepsilon}_3(q).H$ where   $q\equiv -\varepsilon\pmod{4}$ and  $H\leq C_{(3,q-\varepsilon)}\times C_n$ has odd order and $H^2(Q,k^{\times})=1$. In this case set $N:= \PSL^{\varepsilon}_3(q)$. 
As $K(\SL^{\varepsilon}_3(q))=X(\SL^{\varepsilon}_3(q))$ by Proposition~\ref{prop:SU3} and $\SL^{\varepsilon}_3(q)$ is a $2'$-representation group of $\PSL^{\varepsilon}_3(q)$   we also have that $K(N)=X(N)$ by Theorem~\ref{thm:LT17}(c). Therefore $K(Q)=X(Q)$ by Lemma~\ref{lem:normal} since  $N\unlhd Q$ with odd \smallskip index. \\
Next we assume that $H^2(G,k^{\times})\neq 1$ and let $\wt{Q}$ be a $2'$-representation group of $Q$.  In this case it suffices to prove that $K(\widetilde{Q})=X(\widetilde{Q})$, 
because  it then follows from Theorem~\ref{thm:LT17}(c) that  that $K(G)=X(G)$.  We go through the possibilities for $Q$ given by Proposition~\ref{prop:classificSD} and Proposition~\ref{prop:Smult} as \smallskip follows.\\
\textsf{Case 7}: $Q\cong  \SU(2,9,\pm 1)$.  
First, recall that  $\SU(2,9,\pm 1)$ is isomorphic to $2.\PGL_2(9)$\,. 
(See proof of Proposition \ref{prop:Smult}(d)).
By Proposition~\ref{prop:Smult}(d) we have  $H^2(Q,k^{\times})\cong C_3$ and  we may choose $\wt{Q}$ to be $6.\PGL_2(9)$. Therefore Lemma~\ref{lem:Op(G)} yields $K(\wt{Q})=X(\wt{Q})$ because $\wt{Q}$ admits a normal (central) subgroup of \smallskip order~$2$. \\
\textsf{Case 8}: $Q\cong  \PSL^{\varepsilon}_3(q).H$ where   $q\equiv -\varepsilon\pmod{4}$ and  $H\leq C_{(3,q-\varepsilon)}\times C_n$ has odd order and $|H^2(Q,k^{\times})|=3$. Then we can take $\widetilde{Q}=N.H$ with $N:=\SL^{\varepsilon}_3(q)$. Now, $K(N)=X(N)$ by Proposition~\ref{prop:SU3}, so that $K(\widetilde{Q})=X(\widetilde{Q})$ by Lemma~\ref{lem:normal} because $N$ is normal of odd index \smallskip in~$\widetilde{Q}$.\\
\textsf{Case 9}: $Q\cong  \PSL^{\varepsilon}_3(q).H$ where   $q\equiv -\varepsilon\pmod{4}$ and  $H\leq C_{(3,q-\varepsilon)}\times C_n$ has odd order and $|H^2(Q,k^{\times})|=9$. 
 In this case $\wt{Q}$  is a central extension of $Q_1:=\SL^{\varepsilon}_3(q).H$ with kernel $Z\cong C_3$. Since $\SL^{\varepsilon}_3(q)$ is normal in $Q_1$, there exists a normal subgroup $Y\unlhd \wt{Q}$ containing $Z$ and such that $Y/Z\cong \SL^{\varepsilon}_3(q)$ and as $\SL^{\varepsilon}_3(q)$ is its own $2'$-representation group, we have that $Y\cong Z\times \SL^{\varepsilon}_3(q)$.
Therefore, as $K(\SL^{\varepsilon}_3(q))=X(\SL^{\varepsilon}_3(q))$ by Proposition~\ref{prop:SU3},  it follows from Lemma~\ref{lem:normal}, applied a first time, that $K(Y)=X(Y)$ and applied a second time that $K(\wt{Q})=X(\wt{Q})$. 
\end{proof}

\newpage

\begin{lem}\label{lem:SD5b}{\ }
\begin{enumerate}
\item[\rm(a)] If $G=\PGL_2^{\ast}(9)$, then \smallskip $K(G)=X(G)$.
\item[\rm(b)] If $G=3.\PGL_2^{\ast}(9)$, then $K(G)\cong \IZ/3\IZ$. Moreover, the indecomposable representatives of the two non-trivial elements of $K(G)$ lie in the two distinct faithful and dual $2$-blocks with full defect $B$ and $B^{\ast}$ of $G$. Their Loewy and socle series  are respectively
$$ \boxed{\footnotesize \begin{matrix} 9 \\ 9 \ \ 6 \\ 9\end{matrix}} \quad\text{ and }\quad
\boxed{\footnotesize \begin{matrix} 9^* \\ 9^* \ \ 6^* \\ 9^*\end{matrix}} 
$$
where $9$ (resp. $6$) denotes the unique $9$-dimensional (resp. $6$-dimensional) simple \smallskip $kB$-module and $6^\ast$ (resp. $9^\ast$)  is the $k$-dual of $6$ (resp. $9$). 
\item[\rm(c)]If $G/O_{2'}(G)\cong \PGL_2^{\ast}(9)$, then $K(G)/X(G)$ is isomorphic to a subgroup of  $\IZ/3\IZ$. 
\end{enumerate}
\end{lem}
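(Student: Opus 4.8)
The three parts are of rather different natures, and I would organise the proof accordingly. For part~(a), the group $G=\PGL_2^\ast(9)\cong\mathrm{M}_{10}$ has a self-normalising Sylow $2$-subgroup (this is the semi-dihedral group of order $16$, and one can read it off from the fact that $\PGL_2^\ast(q^2)=\PSL_2(q^2).2$ with $\PSL_2(9)$ having a self-normalising Sylow $2$-subgroup, just as in the proof of Lemma~\ref{lem:SD5a}). Hence Theorem~\ref{thm:CMT}(e) immediately gives $K(G)=\ker(\Res^G_P)=\{[k]\}=X(G)$, since $G$ is $2'$-perfect. So part~(a) is essentially a one-line citation.

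For part~(c), I would deduce it from~(a) and~(b) together with Theorem~\ref{thm:LT17}. Here $O_{2'}(G)$ may be non-trivial, but $Q:=G/O_{2'}(G)\cong\PGL_2^\ast(9)$, whose Schur multiplier has $2'$-part $C_3$ by Proposition~\ref{prop:Smult}(g), so a $2'$-representation group of $Q$ is $\wt Q=3.\PGL_2^\ast(9)$. By Theorem~\ref{thm:LT17}(b) there is an injection $K(G)/X(G)\hookrightarrow K(\wt Q)/X(\wt Q)$, and $X(\wt Q)=\{[k]\}$ since $\wt Q=3.\PGL_2^\ast(9)$ is still $2'$-perfect (its abelianisation is a $2$-group — in fact $G'=G$ because $\PGL_2^\ast(9)$ is $2'$-perfect and the central $C_3$ extension is perfect modulo $2$-torsion; more carefully, $\wt Q/[\wt Q,\wt Q]$ is a quotient of $C_2$). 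Combined with part~(b), which identifies $K(\wt Q)\cong\IZ/3\IZ$, this yields $K(G)/X(G)\hookrightarrow\IZ/3\IZ$, as claimed. The only subtlety is checking $X(\wt Q)=\{[k]\}$, i.e. that $3.\PGL_2^\ast(9)$ has no non-trivial one-dimensional $k$-representations in characteristic $2$; this follows since $\mathrm{M}_{10}$ has trivial abelianisation of odd part and the stem extension by $C_3$ does not create a normal subgroup of odd prime index.

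The real work is part~(b), and this is where I expect the main obstacle. One must show $K(3.\PGL_2^\ast(9))\cong\IZ/3\IZ$ and exhibit the two non-trivial classes concretely inside the faithful full-defect blocks $B$, $B^\ast$. My plan is first to prove the bound $|K(G)|\le 3$: the injection $K(G)/X(G)\hookrightarrow K(\wt Q)/X(\wt Q)$ with $\wt Q=\wt Q$ (here $G$ already equals its own $2'$-representation group, since $O_{2'}(G)=1$ and $H^2(G,k^\times)\cong C_3$ would give a further $C_3$ cover — wait, one must be slightly careful: $G=3.\PGL_2^\ast(9)$ may itself have a non-trivial $2'$-multiplier, so $\wt Q$ could be $9.\PGL_2^\ast(9)$; in that case one instead argues directly via $\Res^G_P$ and $|T(G):K(G)|$, or uses that $T(G)/X(G)\hookrightarrow T(\wt Q)/X(\wt Q)$ constrains the torsion). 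The cleanest route to the upper bound is: $TT(G)\cong K(G)\oplus\IZ/2\IZ$ by Theorem~\ref{thm:CMT}(d), and $TT(G)$ has order dividing $|H^2(G,k^\times)|\cdot 2$-type data; more robustly, Theorem~\ref{thm:LT17} applied with the chain $G\to Q\to 1$ and the computation of $H^2$ confines $K(G)/X(G)$ to a $3$-group of rank $\le 1$. For the lower bound — that $\IZ/3\IZ$ is actually \emph{achieved} — the structural/block-theoretic claim must be verified by an explicit construction: identify the unique $9$-dimensional and unique $6$-dimensional simple $kB$-modules in the principal-defect faithful $2$-block of $3.\PGL_2^\ast(9)$ (this is a concrete modular-representation-theory computation, likely done with a computer algebra system such as {\sf GAP} or {\sf Magma} as invoked in the paper's macros), build the uniserial module with the indicated Loewy series $9/(9\ 6)/9$, and check directly that its $k$-endomorphism algebra is $k\oplus(\mathrm{proj})$, hence that it is endo-trivial, and that it is a trivial source module (equivalently, lies in the kernel of $\Res^G_P$) of order $3$ in $T(G)$. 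The dual module gives the class in $B^\ast$. Thus the hard part is the explicit identification and endo-triviality verification of these two $24$-dimensional modules; everything else is an assembly of the quoted results.
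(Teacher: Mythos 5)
Parts (a) and (c) of your plan coincide with the paper's proof: (a) is Theorem~\ref{thm:CMT}(e) applied to the self-normalising semi-dihedral Sylow $2$-subgroup of $\PGL_2^{\ast}(9)\cong\mathrm{M}_{10}$, and (c) is Theorem~\ref{thm:LT17}(b) with $\wt{Q}=3.\PGL_2^{\ast}(9)$ combined with part (b). The problem is part (b), where your upper-bound argument has a genuine gap. You sense it yourself: for $G=3.\PGL_2^{\ast}(9)$ one has $O_{2'}(G)=Z(G)\cong C_3$, so $G/O_{2'}(G)\cong\PGL_2^{\ast}(9)$ and its $2'$-representation group is $G$ itself; Theorem~\ref{thm:LT17} therefore only returns the tautology $K(G)/X(G)\hookrightarrow K(G)/X(G)$. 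Your fallback suggestions (bounding $TT(G)$ by ``$|H^2(G,k^\times)|\cdot 2$-type data'', or applying Theorem~\ref{thm:LT17} ``with the chain $G\to Q\to 1$'') are not actual arguments: nothing quoted in the paper bounds $K(G)$ by the Schur multiplier. The bound the paper uses is elementary and of a different nature: one computes $N_G(P)=P\times Z(G)\cong P\times C_3$, so every indecomposable module in $K(G)$ is the Green correspondent of a one-dimensional $kN_G(P)$-module (equivalently, restriction embeds $K(G)$ into $K(N_G(P))=X(N_G(P))\cong\IZ/3\IZ$, using Lemma~\ref{lem:Op(G)} for the equality). Hence $|K(G)|\leq 3$ for free, and all of part (b) reduces to checking that the Green correspondents $X_1$, $X_2$ of the two non-trivial one-dimensional $kN_G(P)$-modules are endo-trivial --- which the paper does by a \MAGMA{} computation (induce from $N_G(P)$, extract the $33$-dimensional indecomposable summand, and verify that $\Res^G_P(X_1)\otimes_k\Res^G_P(X_1)^{*}$ strips to $k$). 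Your lower-bound plan (explicit construction and endo-triviality check inside the faithful blocks) is in the same computational spirit, but note that the modules have dimension $9+9+6+9=33$, not $24$ as you wrote, and obtaining them as Green correspondents is both more direct and automatically yields the trivial source property.
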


\begin{proof}
\begin{enumerate}
\item[\rm(a)] The group $\PGL_2^{\ast}(9)$ has a semi-dihedral Sylow $2$-subgroup $P\cong \text{SD}_{16}$, which is self-normalising. Hence the claim is immediate from \smallskip Theorem~\ref{thm:CMT}(e). 
\item[\rm(b)] 
We treat the group $G=3.\PGL_2^{\ast}(9)$ entirely via computer algebra using MAGMA \cite{MAGMA}.\\
First, we note that the block structure of $G$ can be found in the \emph{decomposition matrices} section at the Modular Atlas Homepage \cite[$A_6.2_3$]{ModAtlas}, where we also read that $B$ has exactly two simple modules, one of dimension~$6$ and one of dimension~$9$.\\
Next,  in this case, $N_G(P)=P\times Z(G)\cong P\times C_3$, hence by the definition of $K(G)$ we need to show that the Green correspondents $X_1$ and $X_2$ of the two non-trivial $1$-dimensional $kN_G(P)$-modules are endo-trivial $kG$-modules. However, as $X_1$ and $X_2$ must be dual to each other, it suffices to consider~$X_1$. Computing $X_1$ with MAGMA we find that $X_1$ has the given Loewy (and socle) series and that its  restriction to $P$ is an endo-trivial module and therefore so \smallskip is $X_1$.
\item[\rm(c)] For $Q\cong\PGL_2^{\ast}(9)$ we take $\widetilde{Q}=3.\PGL_2^{\ast}(9)$ and the claim follows from Theorem~\ref{thm:LT17}(b) together with {\rm(b)}.
\end{enumerate}
\end{proof}

\noindent We can now prove our main Theorem.

\begin{proof}[Proof of Theorem~\ref{thm:intro}]
We know from Theorem~\ref{thm:CMT}(a)--(d) that 
$$T(G)=TT(G)\oplus TF(G)\cong K(G)\oplus\IZ/2\IZ\oplus\IZ\,,$$
so that it only remains to compute the group  $K(G)$ of trivial source endo-trivial modules. 
If $G/O_{2'}(G)\ncong \PGL_2^{\ast}(9)$, then $K(G)=X(G)$  by Proposition~\ref{prop:proofA(a)}, hence {\rm(a)}. 
If $G/O_{2'}(G)\cong \PGL_2^{\ast}(9)$, then the assertions in {\rm(b)} follow directly from Lemma~\ref{lem:SD5b}(a),(b) and (c).
\end{proof}

\bigskip
\bigskip
\bigskip

\section*{Acknowledgements}
\noindent
{The authors would like to thank Jesper Grodal, Burkhard K\"ulshammer, Gunter Malle,  Nadia Mazza, Benjamin Sambale,  Mandi Schaeffer Fry, Gernot Stroth, 
Jacques Th\'{e}venaz, and Rebecca Waldecker  
for helpful hints and discussions related to the content of this article. They are grateful  to the referee for detailed comments and suggestions.}
\bigskip


\bigskip
\bigskip

\bibliographystyle{amsalpha}
\bibliography{biblio.bib}

\newcommand{\etalchar}[1]{$^{#1}$}
\providecommand{\bysame}{\leavevmode\hbox to3em{\hrulefill}\thinspace}
\providecommand{\MR}{\relax\ifhmode\unskip\space\fi MR }
\providecommand{\MRhref}[2]{%
  \href{http://www.ams.org/mathscinet-getitem?mr=#1}{#2}
}
\providecommand{\href}[2]{#2}
\begin{thebibliography}{CCN{\etalchar{+}}85}

\bibitem[ABG70]{ABG}
J.~L. Alperin, R.~Brauer, and D.~Gorenstein, \emph{Finite groups with
  quasi-dihedral and wreathed {S}ylow {$2$}-subgroups}, Trans. Amer. Math. Soc.
  \textbf{151} (1970), 1--261.


\bibitem[Ben71]{Bender71}
H.~Bender, \emph{Transitive {G}ruppen gerader {O}rdnung, in denen jede
  {I}nvolution genau einen {P}unkt festl\"{a}\ss t}, J. Algebra \textbf{17}
  (1971), 527--554.
  
\bibitem[BCP97]{MAGMA}
W.~Bosma, J.~Cannon, and C.~Playoust, \emph{The {M}agma algebra system. {I}.
  {T}he user language}, J. Symbolic Comput. \textbf{24} (1997), no.~3-4,
  235--265.

\bibitem[CMN16]{CMN16}
J.~F. Carlson, N.~Mazza, and D.~K. Nakano, \emph{Endotrivial modules for finite
  groups of {L}ie type {$A$} in nondefining characteristic}, Math. Z.
  \textbf{282} (2016), 1--24.

\bibitem[CMT11]{CMTpsol}
J.~F. Carlson, N.~Mazza, and J.~Th\'{e}venaz, \emph{Endotrivial modules for
  {$p$}-solvable groups}, Trans. Amer. Math. Soc. \textbf{363} (2011),
  4979--4996.

\bibitem[CMT13]{CMT11quat}
\bysame, \emph{Endotrivial modules over groups with quaternion or semi-dihedral
  {S}ylow 2-subgroup}, J. Eur. Math. Soc. (JEMS) \textbf{15} (2013), 157--177.
  

\bibitem[CT00]{CT}
J.~F. Carlson and J.~Th\'{e}venaz, \emph{Torsion endo-trivial modules}, Algebr.
  Represent. Theory \textbf{3} (2000), 303--335.
  
 \bibitem[CF64]{CarterFong}
R.~Carter and P.~Fong, \emph{The {S}ylow {$2$}-subgroups of the finite
  classical groups}, J. Algebra \textbf{1} (1964), 139--151.
  
   \bibitem[CCN{\etalchar{+}}85]{ATLAS}
J.~H. Conway, R.~T. Curtis, S.~P. Norton, R.~A. Parker, and R.~A. Wilson,
  \emph{Atlas of finite groups}, Oxford University Press, Eynsham, 1985. 

\bibitem[Cra20]{CravenET}
D.~A. Craven, \emph{Trivial-source endotrivial modules for sporadic groups}, To
  appear in Beitr. Algebra Geom.,
  \href{https://doi.org/10.1007/s13366-020-00521-8}{DOI:10.1007/s13366-020-00521-8}
  (2020).

  

\bibitem[Dad78a]{DADE78a}
E.~C. Dade, \emph{Endo-permutation modules over p-groups. {I}}, Ann. of Math
  \textbf{107} (1978), 459--494.

\bibitem[Dad78b]{DADE78b}
\bysame, \emph{{Endo-permutation modules over p-groups. {II}.}}, Ann. of Math
  \textbf{108} (1978), 317--346.

\bibitem[Gor69]{Gor69}
D.~Gorenstein, \emph{Finite groups the centralizers of whose involutions have
  normal {$2$}-complements}, Canadian J. Math. \textbf{21} (1969), 335--357.

\bibitem[Gro18]{GrodalET}
J.~Grodal, \emph{Endotrivial modules for finite groups via homotopy theory},
  \href{https://arxiv.org/pdf/1608.00499v2.pdf}{arXiv:1608.00499v2}, 2018.

\bibitem[Isa08]{ISAACS}
I.~M. Isaacs, \emph{Finite group theory}, Graduate Studies in Mathematics,
  vol.~92, American Mathematical Society, Providence, RI, 2008.

\bibitem[Jon74]{JONES}
M.~R. Jones, \emph{Some inequalities for the multiplicator of a finite group.
  {II}}, Proc. Amer. Math. Soc. \textbf{45} (1974), 167--172.

\bibitem[Kar87]{Karp}
G.~Karpilovsky, \emph{The {S}chur multiplier}, London Mathematical Society
  Monographs. New Series, vol.~2, The Clarendon Press, Oxford University Press,
  New York, 1987.

\bibitem[Kaw93]{Kaw93}
S.~Kawata, \emph{On {A}uslander-{R}eiten components for certain group modules},
  Osaka J. Math. \textbf{30} (1993), 137--157.
  
  \bibitem[Kon05]{Kond05}
A.~S. Kondrat'ev, \emph{Normalizers of {S}ylow 2-subgroups in finite simple
  groups}, Mat. Zametki \textbf{78} (2005), 368--376.

\bibitem[KL16]{KoLa15}
S.~Koshitani and C.~Lassueur, \emph{Endo-trivial modules for finite groups with
  dihedral {S}ylow 2-subgroup}, J. Group Theory \textbf{19} (2016), 635--660.

\bibitem[KLS20]{KLS20}
S.~Koshitani, C.~Lassueur, and B.~Sambale, \emph{Splendid {M}orita equivalences
  for principal blocks with semidihedral defect groups},
  \href{https://arxiv.org/pdf/2010.08541.pdf}{arXiv:2010.08541}, 2020.



\bibitem[LM15]{LaMaz15}
C.~Lassueur and N.~Mazza, \emph{Endotrivial modules for the sporadic groups and
  their covers}, J. Pure Appl. Algebra \textbf{219} (2015), 4203--4228.

\bibitem[LT17a]{LT17etcentralext}
C.~Lassueur and J.~Th\'{e}venaz, \emph{Endotrivial modules: a reduction to
  {$p'$}-central extensions}, Pacific J. Math. \textbf{287} (2017), 423--438.

\bibitem[LT17b]{LT17b}
\bysame, \emph{Universal {$p'$}-central extensions}, Expo. Math. \textbf{35}
  (2017), 237--251.

\bibitem[Maz19]{MazzaBook}
N.~Mazza, \emph{Endotrivial modules}, SpringerBriefs in Mathematics, Springer,
  Cham, 2019.

\bibitem[MT07]{MT07}
N.~Mazza and J.~Th{\'e}venaz, \emph{Endotrivial modules in the cyclic case},
  Archiv der Mathematik \textbf{89} (2007), 497--503.

\bibitem[SFT18]{ST18}
A.~A. Schaeffer~Fry and J.~Taylor, \emph{On self-normalising {S}ylow
  2-subgroups in type {A}}, J. Lie Theory \textbf{28} (2018), 139--168.

\bibitem[Tah72]{Tahara}
K.~I. Tahara, \emph{On the second cohomology groups of semidirect products},
  Math. Z. \textbf{129} (1972), 365--379.

\bibitem[WTP{\etalchar{+}}]{ModAtlas}
R.~Wilson, J.~Thackray, R.~Parker, F.~Noeske, J.~M\"uller, F.~L\"ubeck,
  C.~Jansen, G.~Hiss, and T.~Breuer, \emph{The modular atlas project},
  {\url{http://www.math.rwth-aachen.de/~MOC}}.

\end{thebibliography}

%


\end{document}